\documentclass{ws-ijbc}
\usepackage{ws-rotating}     
\usepackage{graphicx}
\usepackage{epstopdf}
\usepackage{subfigure}




\newcommand{\bydef}{\,\stackrel{\mbox{\tiny\textnormal{\raisebox{0ex}[0ex][0ex]{def}}}}{=}\,}
\newcommand{\R}{\mathbb{R}}
\newcommand{\Z}{\mathbb{Z}}
\newcommand{\cC}{\mathcal{C}_{\alpha,k\pm1}^{(i)}}

\begin{document}
\title{Chebyshev-Taylor parameterization of stable/unstable manifolds for 
periodic orbits: implementation and applications 
}
\author{J.D. Mireles James}
\author{Maxime Murray}
\address{Department of mathematical sciences, Florida Atlantic University, 777 glades road\\
Boca Raton, Florida, 33431, United States of America\\
jmirelesjames@fau.edu \\
mmurray2016@fau.edu
}

\maketitle

\begin{abstract}
This paper develops Chebyshev-Taylor spectral methods for studying 
stable/unstable manifolds attached to periodic solutions of differential equations.
The work exploits the parameterization method
-- a general functional analytic
framework for studying invariant manifolds. Useful features of the parameterization 
method include the fact that it 
can follow folds in the embedding, recovers the dynamics on the 
manifold through a simple conjugacy, and admits a natural notion of 
a-posteriori error analysis.  Our approach begins by 
deriving a recursive system of linear differential equations 
describing the Taylor coefficients of the invariant manifold.
We represent periodic solutions of these equations as
solutions of coupled systems of boundary value problems. 
We discuss the implementation and performance of the method
for the Lorenz system, and for the planar
circular restricted three and four body problems.
We also illustrate the use of the method as a tool for computing 
cycle-to-cycle connecting orbits.  
\end{abstract}

\begin{center}
{\bf \small Key words.} 
{ \small periodic orbit, (un)stable manifold, parameterization method, boundary value problems,
automatic differentiation, Chebyshev polynomials}
\end{center}

\section{Introduction}
Periodic solutions of differential equations are the basic building blocks of 
recurrence in nonlinear dynamics.  Moreover, hyperbolic periodic orbits and 
their heteroclinic/homoclinic connections are natural generators of chaotic motions.
Since a heteroclinic/homoclinic orbit 
will approach a periodic solution along its local stable/unstable manifolds,  
computational methods for studying these manifolds are of great interest.  
A schematic description of the stable manifold of a periodic orbit, 
beside an actual stable manifold in the Lorenz system are illustrated 
in Figure \ref{fig:stableManifold}.  See any of the works 
\cite{MR2163533, MR2475705, MR1700577, MR1705705, MR1630282, 
MR2136745, MR3026237, MR1981055,MR1702117, MR1054714, simoNotes}
for more discussion, but we caution
that any such list will hardly scratch the surface of the relevant literature.

The stable/unstable normal bundles of a periodic solution 
approximate the stable/unstable manifolds to first order, and 
these bundles are obtained by studying the 
equations of first variation -- or equivalently  -- by solving 
certain periodic eigenvalue problems.  Higher order jets
of the invariant manifold could be studied  
via higher order equations of variation, however the complexity of 
these equations grows exponentially with the order of the jet.  
More efficient methods for studying the jets are obtained by 
reformulating the invariant manifold as the solution of an operator equation, and 
studying the operator equation via numerical methods.

The parameterization method is a general functional analytic 
framework for studying invariant manifolds
\cite{MR1976079, MR1976080, MR2177465, MR3309008, MR2967458, 
MR3082311, MR2240743, MR2289544, MR2299977} whose goal 
is to find a chart/covering map conjugating the dynamics on the invariant 
manifold to a simple and well understood model (correctly choosing this 
model is part of the method).   By viewing the conjugacy as an operator 
equation for the unknown chart, the problem is susceptible to numerical methods.  
We will see below that the operator equation of interest in the present work is actually a 
PDE with prescribed periodic data.  

This operator equation is referred to as the \textit{invariance equation}.
The unknown parameterization is not required to be the graph of a function,
and hence is able to follow folds in the embedding.  See again Figure \ref{fig:stableManifold}.
Since the invariance equation is based on a dynamical conjugacy, the 
parameterization method recovers the dynamics on the manifold
in addition to the embedding.   By now there is a small industry 
devoted to the parameterization method, and a review of the literature 
would take us far afield.  We refer the interested reader to the 
recent book on the subject \cite{MR3467671}, where many examples 
and much more complete discussion of the literature is found.

\begin{figure}
    \centering
    \subfigure{{\includegraphics[width=.45\textwidth]{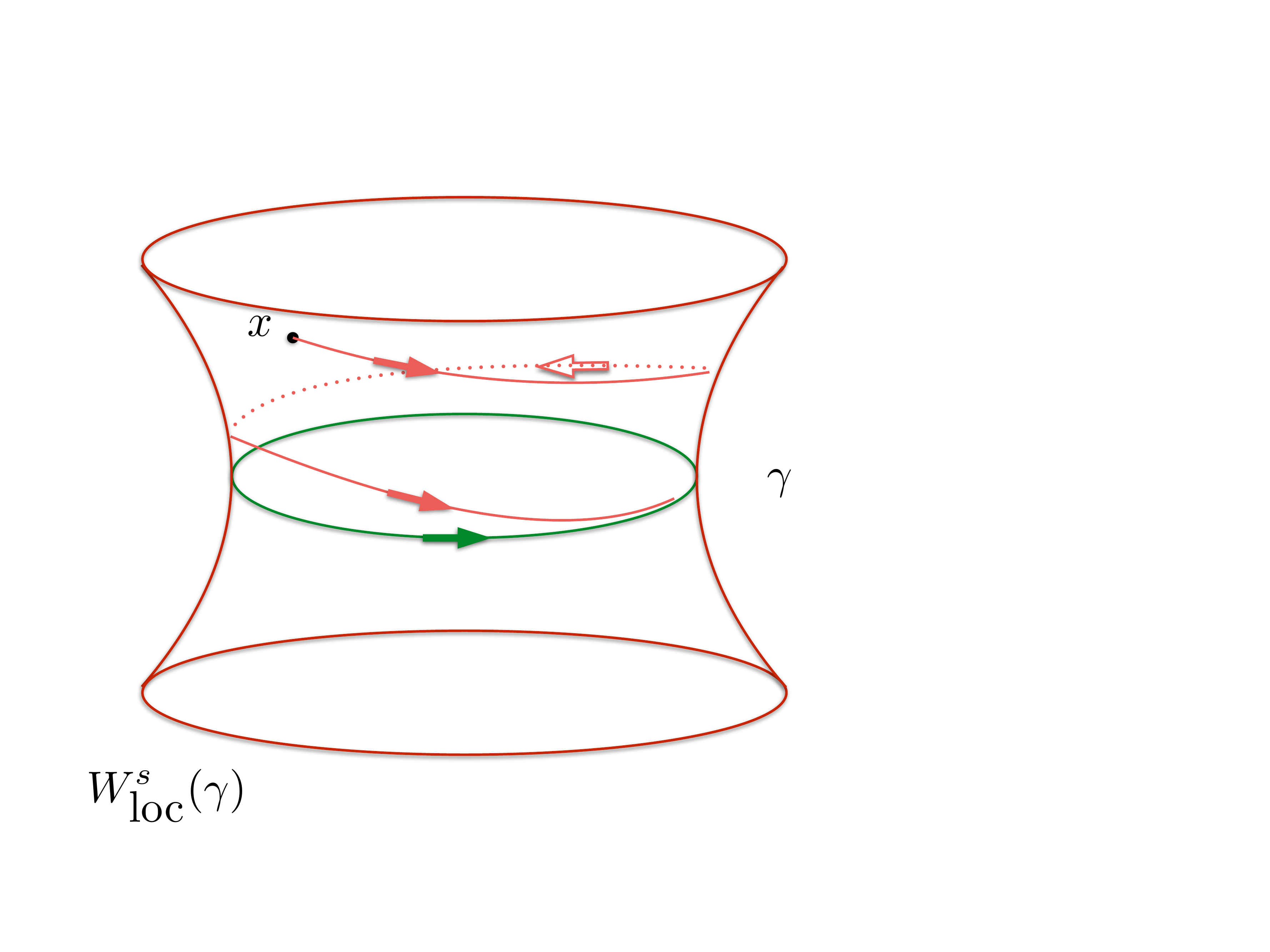} }}
     \subfigure{{\includegraphics[width=.45\textwidth]{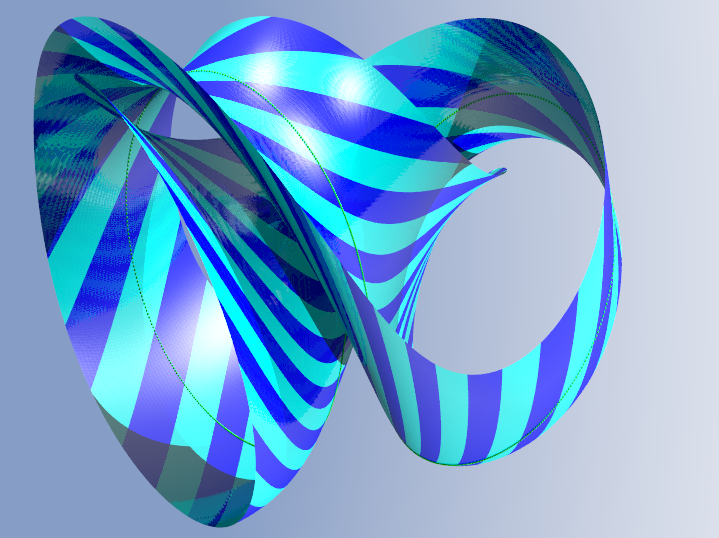} }}
\caption{The local stable manifold attached to a periodic orbit $\gamma(t)$: (LEFT)
schematic representation of the stable manifold, i.e. it is the union of all solutions
accumulating at $\gamma$ as $t \to \infty$ (periodic orbit 
shown here as the green loop).
 The unstable manifold is the same but with 
orbits approaching the periodic orbit asymptotically in backward time.
(RIGHT) local stable manifold of an actual periodic orbit near the 
Lorenz attractor.  This manifold is computed using the techniques of the 
present work.  In particular, changes in color in the figure indicate the 
transitions from one Chebyshev domain to another.
}\label{fig:stableManifold}
\end{figure}

The present work is not the first numerical treatment of parameterized
stable/unstable manifolds attached to periodic orbits, and indeed we  build on the earlier studies of  
\cite{MR2177465, MR3304254, MR2551254, MR3118249}.
 As in these earlier studies, 
we make a formal Taylor series arguments which analytically reduces the invariance equation 
before making any numerical computations.  
Since the coefficients of the Taylor series are themselves periodic functions, the formal
power matching scheme leads to so called \textit{homological equations}
describing the unknown coefficients.  In the present case of a periodic orbit,   
the homological equations are linear ordinary differential equations
with periodic coefficients and periodic forcing.  We solve these linear homological 
equations recursively using numerical spectral methods. 
By computing the formal series solution to high order we obtain 
an approximate solution which describes the 
stable/unstable manifold far from the periodic orbit.  

In the earlier works just cited the differential equations -- describing 
the periodic orbit, describing the normal bundles, and the homological equations
describing the higher order jets -- are all solved using Fourier spectral methods.  
Fourier methods are both efficient and accurate when applied to periodic solutions
of moderate length.  This efficiency is due in part to the fact that differentiation is a 
diagonal operation in the transform domain, and in part to the fact 
that the FFT speeds up evaluation of nonlinearities.
However, the decay rate of the Fourier coefficients
gets increasingly slow as the period/harmonic complexity
of the orbit grow.  In practice this means that it is necessary to compute
more and more Fourier coefficients, and  for long enough orbits the 
Fourier approximation becomes 
impractical. 

There is much recent interest in numerical methods
based on Chebyshev spectral approximation of  solutions to
boundary value problems.  We refer the interested reader to 
 \cite{MR2465699, MR2767023, MR2384813, MR3012510}
 and the references discussed there in.  The present work 
 builds on techniques developed by a number of authors which use 
 Chebyshev spectral methods 
to compute long periodic solutions of differential equations
\cite{MR3392421, LessardReinhardt, RayJB}.
The merit of this approach is that the Chebyshev spectral methods
posess many of the advantages of Fourier series 
-- for example differentiation is a tri-diagonal operation in the transform 
domain and the fast cosine transform is available for evaluating nonlinearities --
but Chebyshev series apply to non-periodic boundary value problems.  
Treating a periodic solution as
a series of coupled boundary value problems -- on smaller domains --
facilitates control of the decay rates of the coefficients.

Motivated by these developments,  the present work applies 
Chebyshev methods not only the 
periodic orbit -- but also to the computation of 
the normal bundles and the homological equations
for the higher order jets. The result is a 
computational method for finding Chebyshev-Taylor 
expansions of the local stable/unstable manifolds attached to
periodic orbits.  Our method applies to more complicated orbits and 
their attached invariant manifolds than could be studied using only
the Fourier-Taylor approach.

\begin{remark}[Connecting orbits and extensions of local stable/unstable manifolds]
{\em 
Of course computing local stable/unstable manifolds attached to periodic orbits
is only a means to an end.  In applications we are often interested in either using the 
local manifolds to compute connecting orbits, or to grow larger 
portions of the invariant manifold in order to study the global dynamics.  
While in the present work we do consider a number of 
example computations for connecting orbits, we do not make any serious 
attempt to numerically grow larger local manifolds. 
This is because the literature on computational methods for 
growing invariant manifolds is extensive and well developed.   
The interested reader will want to consult the 
review paper \cite{MR2136745} for a thorough overview of the literature, and 
will find other powerful methods and fuller discussion in 
\cite{MR2179495, MR1870261, MR1981055}.  We only note that the methods developed
in the present work could be combined with existing continuation methods 
for even better results.  This is especially true for methods 
 which exploit the curvature or other differential geometric properties
of the manifold.  
}
\end{remark}

\begin{remark}[Automatic differentiation and polynomial nonlinearities] \label{rem:autoDiff}
{\em
Multiplication of Taylor and Chebyshev series is
straight forward thanks to the Cauchy product in the former case and the 
discrete cosine convolution operation in the later.
Then formal series manipulations for polynomial nonlinearities
are especially transparent in Chebyshev-Taylor bases.   
In the present work we are interested 
in applications coming from celestial mechanics which involve non-polynomial 
vector fields.  In order to simplify matters we
exploit methods of \textit{automatic differentiation} and transform 
to the polynomial setting, albeit in a higher dimensional phase space. 
This is discussed in detail in Section \ref{sec:autoDiff}.

The use of 
automatic differentiation is a convenience rather than a necessity, as the FFT 
could be used to evaluate general nonlinearities. In fact, even after automatic differentiation 
we use the fast cosine transform to evaluate higher order polynomial nonlinearities
in the present work.  Nevertheless, the use of automatic differentiation
in the present work simplifies the implementation details of our algorithms --  as
all our computations are reduced to Newton's method for large polynomial systems.   
Automatic differentiation also simplifies a-posteriori error analysis for the method, 
which when followed to its logical conclusion provides mathematically 
rigorous validated error bounds for the parameterizations. 
}
\end{remark}

\section{Review of the parameterization method}
As already mentioned in the introduction, the parameterization method is 
much more general than what we actually use in the present work.
We refer the reader again to the book 
\cite{MR3467671}.  In the following section we review some basic notions
in the very simple setting of an orientable local manifold associated with
one stable/unstable Floquet  exponent.  Generalities such as 
multiple stable/unstable exponents, complex conjugate exponents, and 
non-orientable bundles are discussed in detail in 
\cite{MR3304254}.  The methods of the present work apply in these more general 
setting with only obvious modifications.  We focus on the one dimensional case 
to simplify the exposition.

\subsection{Parameterization of stable/unstable manifolds attached to periodic orbits}
Let $\Omega \subset \mathbb{R}^M$ be an open set and $g \colon \Omega \to \mathbb{R}^M$
be a real analytic vector field.  
Suppose that $\gamma \colon \mathbb{R} \to \mathbb{R}^M$
is a $T$-periodic solution of the first order ordinary differential equation
\[
\dot x = g(x),
\]
that is we assume that $\gamma'(t) = g(\gamma(t))$ with  
$\gamma(t + T) = \gamma(t)$  for all $t \in \mathbb{R}$.  
Suppose also that $\gamma$ has one stable Floquet exponent
\[
\lambda \in \mathbb{R}, \quad \quad \quad \mbox{with } \quad \quad \quad 
\lambda < 0,
\]
so that (by the stable manifold theorem)
there exists a two dimensional manifold of solutions which converge exponentially fast to 
the periodic orbit $\gamma$. 
Let $v \colon \mathbb{R} \to \mathbb{R}^M$ denote the stable normal bundle of $\gamma(t)$,
associated with the exponent $\lambda$. 
We assume that $v$ is an orientable bundle, so that $v(t)$ is $T$ periodic as well.
We note that $(v, \lambda)$ solve the eigenvalue problem 
\[
\frac{d}{dt} v(t) = Dg(\gamma(t)) v(t) - \lambda v(t),
\]
subject to some normalization, perhaps $\|v(t)\| = 1$ for all $t \in \mathbb{R}$
(though in numerical applications we will choose other normalizations).

The goal of the parameterization method is to find a smooth function 
$P \colon [0, T]  \times [-1, 1] \to \mathbb{R}^M$ solving the 
invariance equation
\begin{equation}\label{eq:InvarianceEquation}
 \frac{\partial}{\partial t}P(t,\sigma) + \lambda\sigma \frac{\partial}{\partial \sigma}P(t,\sigma) = g(P(t,\sigma)),
\end{equation} 
subject to the first order constraints
\begin{equation}
P(t, 0) = \gamma(t), 
\end{equation}
and
\begin{equation}\label{eq:P_returns_Orbit}
 \frac{\partial}{\partial \sigma} P(t,0) = v(t).
\end{equation}
Then geometric content of Equation \eqref{eq:InvarianceEquation}
is illustrated in Figure \ref{fig:parmMethodSketch}, but one easily checks 
that the image of $P$ is a stable manifold.

\begin{figure}
    \subfigure{{\includegraphics[width=.4\textwidth]{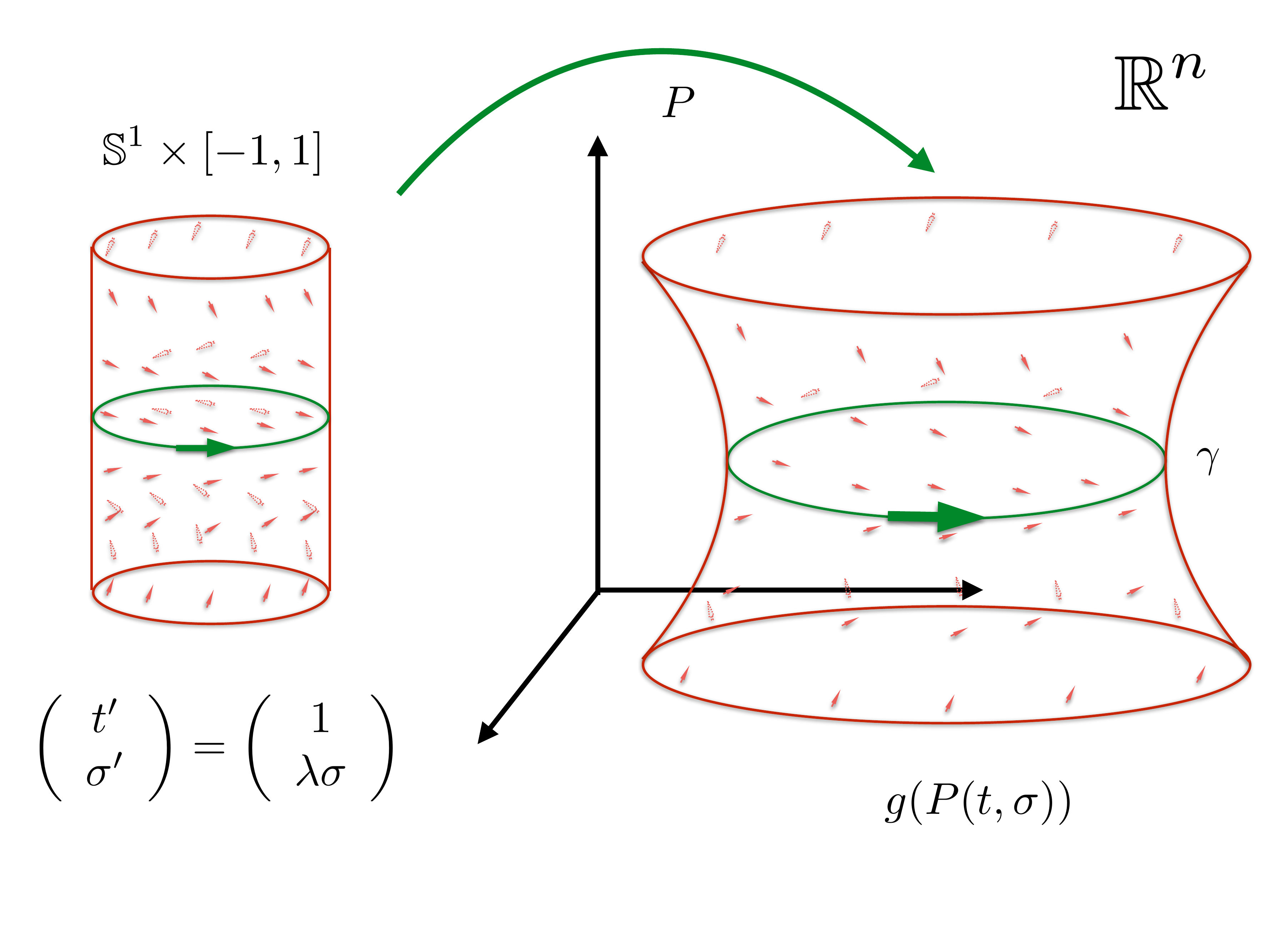}}}
     \subfigure{{\includegraphics[width=.4\textwidth]{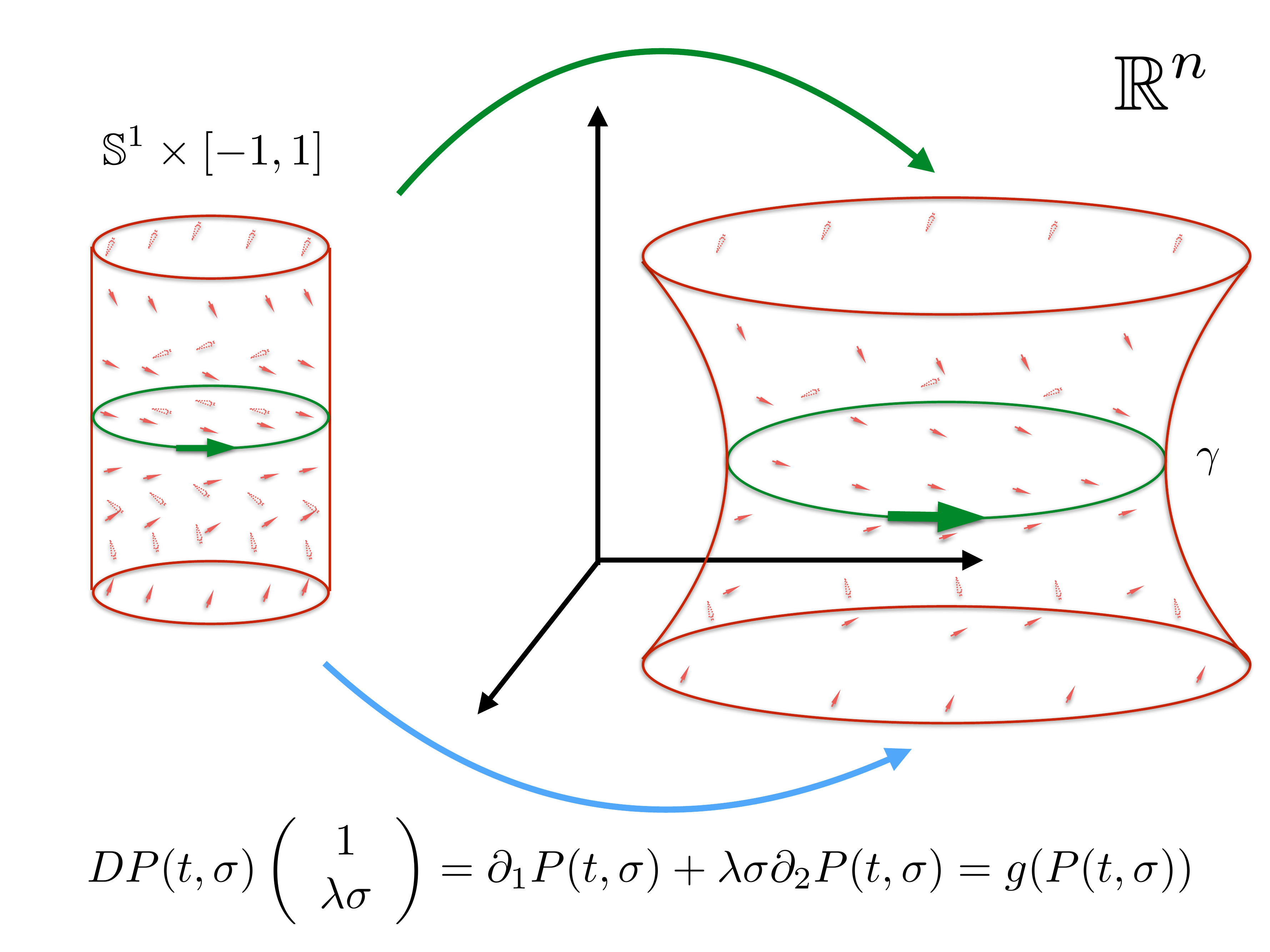} }}
\caption{Geometric meaning of Equation \eqref{eq:InvarianceEquation}: (LEFT) the 
model space for the stable manifold is the cylinder $\mathbb{S}^1 \times [-1, 1]$,
which we endow with the model vector field $t' = 1$, $\sigma' = \lambda \sigma$.
Here $\lambda$ is the stable Floquet exponent of the periodic orbit $\gamma(t)$. 
Now if $P$ is \textit{any} mapping 
from the cylinder into $\mathbb{R}^M$, then $g$ induces a vector field on the 
image of $P$ by restriction. (RIGHT) The derivative of $P$ pushes forward vector fields
defined on the cylinder.
The goal of the parameterization method is to find a mapping $P$ so that the 
push forward of the model dynamics is equal on the image of $P$
to the vector field induced by $g$.  If these vector fields are equal then they
have the same dynamics.  But the dynamics on the cylinder are completely understood.
See Figure \ref{fig:conjSketch}.}\label{fig:parmMethodSketch}
\end{figure}

Indeed, let $P$ be a smooth solution 
of  Equation \eqref{eq:InvarianceEquation} subject to the first order constraints.  
Choose any $\sigma_0 \in (-1, 1)$ and define the curve $x \colon [0, \infty) \to \mathbb{R}^M$
by 
\[
x(t) = P(t, e^{\lambda t} \sigma_0).
\]
Then 
\begin{align*}
\frac{d}{dt} x(t) &= DP(t, e^{\lambda t} \sigma_0) \left(
\begin{array}{c}
1 \\
e^{\lambda t} \lambda \sigma_0
\end{array}
\right) \\
&= \frac{\partial}{\partial t} P(t, e^{\lambda t} \sigma_0) + \lambda \sigma_0 e^{\lambda t}
\frac{\partial}{\partial \sigma} P(t, e^{\lambda t} \sigma_0) \\
&= g(P(t, e^{\lambda t} \sigma_0)) \\
& = g(x(t)),
\end{align*}
as $e^{\lambda t} \sigma_0 \in (-1, 1)$ for all $t \geq 0$.  Then $x(t)$ is a solution curve for
the differential equation having $x(0) = P(0, \sigma_0)$.  
Moreover for any $t_0 \in [0, L]$, since $P$ is continuous we have that 
\begin{align*}
\lim_{t \to \infty} P(t_0, e^{\lambda t} \sigma_0) 
&= P\left(t_0,  \lim_{t \to \infty} e^{\lambda t} \sigma_0 \right) \\
& = P(t_0, 0) \\
&= \gamma(t_0),
\end{align*}
that is, a point on the image of $P$ accumulates at the periodic orbit $\gamma$
with asymptotic phase $\gamma(t_0)$.  In particular, the image of $P$ is 
a local stable manifold for $\gamma$.

One can actually prove more.  For example 
if $P$ solves Equation \eqref{eq:InvarianceEquation} subject to the 
first order constraints, then $P$ actually satisfies the flow conjugacy
\begin{align}\label{eq:FlowConjugacy}
P(s+t,e^{\lambda t}\sigma)= \Phi(P(s,\sigma),t)
\end{align}
for all $t \geq 0$.  Here $\Phi$ is the flow generated by the vector field
$g$.  The meaning of this flow conjugacy is illustrated in Figure \ref{fig:conjSketch}.
The proof of the flow conjugacy is given for example in 
\cite{MR3304254}.
In the same reference it is shown that solutions of Equation \eqref{eq:InvarianceEquation}
are unique up to the choice of the eigenfunction $v(t)$.  (Any constant multiple of 
$v$ is a parameterization of the stable normal bundle, but up to this choice of scaling
the solution is unique).  If the solution $P$ exists, it is as regular as $g$.  In this 
case $P$ is real analytic if $g$ is \cite{MR1976080, MR2177465}.
Moreover, in the case of one stable exponent,
there exists a choice of scaling small enough that the solution $P$ exists, and 
is analytic.  See \cite{MR1976079, MR2177465, MR3118249}.

\begin{figure}
    \centering
    \subfigure{{\includegraphics[width=.6\textwidth]{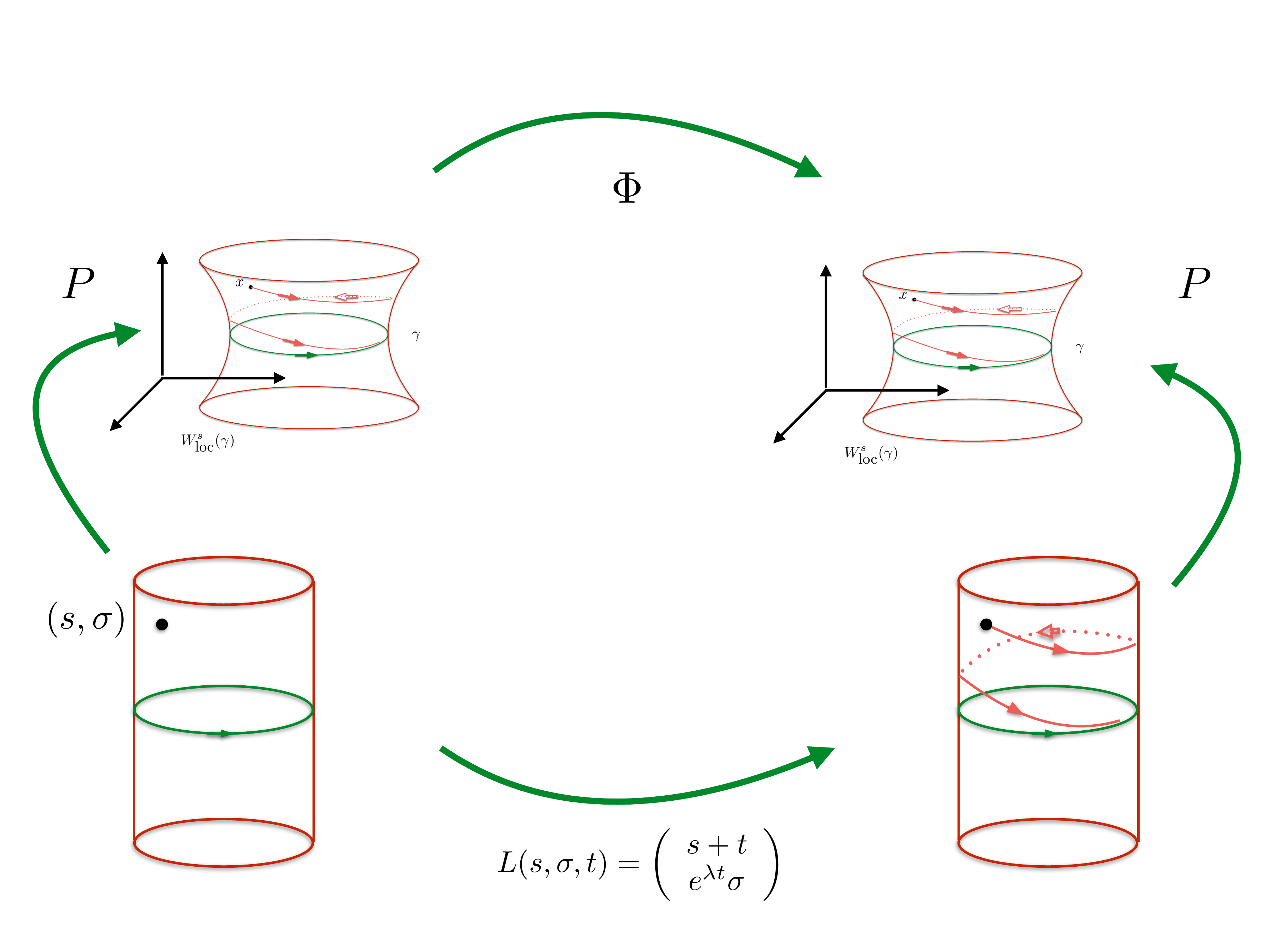} }}
\caption{Flow conjugacy generated by Equation \eqref{eq:InvarianceEquation}: 
Suppose that $P$ solves the invariance equation \eqref{eq:InvarianceEquation}.  Then 
the push forward of the model dynamics match the vector field 
induced on the image of $P$ by $g$, hence the map $P$ takes orbits on the 
cylinder to orbits on the image of $P$, i.e. $P$ conjugates the flow on the 
cylinder to the flow generated by $g$ and the above diagram commutes.
Since the flow on the cylinder is known, 
we obtain the conjugacy given in Equation \eqref{eq:FlowConjugacy}. In particular, 
since $P$ maps the zero section in the cylinder to $\gamma$, and since all 
orbits in the cylinder accumulate to the zero section, the image of $P$ is 
a local stable manifold for $\gamma$.}
\label{fig:conjSketch}
\end{figure}

\subsection{Formal series and the reduction to homological equations}
Since $P$ exists and is analytic it makes sense to seek a power series
solution 
\begin{equation}\label{eq:ExpansionParametrization}
P(t,\sigma)= \sum_{\alpha=0}^\infty A_{\alpha}(t) \sigma^\alpha, 
\end{equation}
where $\sigma \in (-1,1)$ and the functions $A_\alpha \colon \mathbb{R} \to \mathbb{R}^M$ are
analytic and $T$-periodic.  
Plugging the power series into Equation \eqref{eq:InvarianceEquation}, expanding the 
nonlinearities, and matching like powers leads to equations for the unknown Taylor coefficient 
functions.

\bigskip

\noindent \textbf{Example: the Lorenz system} in 
the specific case of the Lorenz field 
we have that $M=3$, and that the vector field $g$ is given by 
\begin{equation*}
g(x,y,z) =
\left( \begin{array}{c}
\sigma (y-x) \\
x\rho -x z - y \\
x y -\beta z 
\end{array} \right),
\end{equation*}
where $\beta,\rho,\sigma$ are positive constants.
Suppose that $\gamma(t) =  (x(t), y(t), z(t))$ is an analytic period- $T$ 
orbit with stable (or unstable) exponent $\lambda$
and that 
$v(t) = (v_1(t), v_2(t), v_3(t))$ is an analytic $T$-periodic parameterization of 
the stable normal bundle.  
We look for 
\[
P(t, s) = \sum_{\alpha = 0}^\infty A_\alpha(t) s^\alpha = 
\sum_{\alpha = 0}^\infty \left(
\begin{array}{c}
a^{(1, \alpha)}(t) \\
a^{(2, \alpha)}(t) \\
a^{(3, \alpha)}(t)
\end{array}
\right) s^\alpha
\]
Then 
\[
A_0(t) = \gamma(t), \quad \quad \quad \mbox{and} \quad \quad \quad 
A_1(t) = v(t).
\]
Since 
\[
 \frac{\partial}{\partial t}P(t, s) + \lambda s \frac{\partial}{\partial s}P(t, s) =
 \sum_{\alpha = 0}^\infty \left(\frac{d}{dt} A_{\alpha}(t) + \alpha \lambda A_{\alpha}(t) \right) s^\alpha,
\]
and 
\begin{align*}
g(P(t,s))  &= \sum_{\alpha = 0}^\infty 
\left(
\begin{array}{c}
\sigma \left( a^{(2, \alpha)} - a^{(1, \alpha)} \right) \\
\rho a^{(1, \alpha)}  - a^{(2, \alpha)} - \sum_{\beta = 0}^\alpha a^{(1, \alpha- \beta)} a^{(3, \beta)} \\
- \beta a^{(3, \alpha)} +     \sum_{\beta = 0}^\alpha a^{(1, \alpha- \beta)} a^{(2, \beta)} 
\end{array}
\right) s^\alpha,
\end{align*}
equating and matching like powers of $s$ leads to 
\begin{align*}
\frac{d}{dt} A_{\alpha}(t) + \alpha \lambda A_{\alpha}(t)  &=
\left(
\begin{array}{c}
\sigma \left( a^{(2, \alpha)} - a^{(1, \alpha)} \right) \\
\rho a^{(1, \alpha)}  - a^{(2, \alpha)} - \sum_{\beta = 0}^\alpha a^{(1, \alpha- \beta)} a^{(3, \beta)} \\
- \beta a^{(3, \alpha)} +     \sum_{\beta = 0}^\alpha a^{(1, \alpha- \beta)} a^{(2, \beta)} 
\end{array}
\right)
\\
&= \left(
\begin{array}{c}
\sigma \left( a^{(2, \alpha)} - a^{(1, \alpha)} \right) \\
\rho a^{(1, \alpha)}  - a^{(2, \alpha)} - a^{(1, \alpha)} a^{(3, 0)} - a^{(3, \alpha)} a^{(1, 0)} \\
- \beta a^{(3, \alpha)} +  a^{(1, \alpha)} a^{(2, 0)} + a^{(2, \alpha)} a^{(1, 0)}
\end{array}
\right) 
+
\left(
\begin{array}{c}
0  \\
-\sum_{\beta = 1}^{\alpha-1} a^{(1, \alpha- \beta)} a^{(3, \beta)} \\
\sum_{\beta = 1}^{\alpha-1} a^{(1, \alpha- \beta)} a^{(2, \beta)} 
\end{array}
\right).
\end{align*}
Noting that 
\begin{align*}
 \left(
\begin{array}{c}
\sigma \left( a^{(2, \alpha)} - a^{(1, \alpha)} \right) \\
\rho a^{(1, \alpha)}  - a^{(2, \alpha)} - a^{(1, \alpha)} a^{(3, 0)} - a^{(3, \alpha)} a^{(1, 0)} \\
- \beta a^{(3, \alpha)} +  a^{(1, \alpha)} a^{(2, 0)} + a^{(2, \alpha)} a^{(1, 0)}
\end{array}
\right) &= 
\left[
\begin{array}{ccc}
-\sigma & \sigma & 0 \\
\rho  - z(t) & 1  & - x(t) \\
y(t)  & x(t) & -\beta  
\end{array}
\right] A_{\alpha}(t) \\
&= Dg(\gamma(t)) A_\alpha(t),
\end{align*}
define the functions $g_\alpha$ by 
\[
g_{\alpha}(A_1(t), \ldots, A_{\alpha-1}(t)) := 
\left(
\begin{array}{c}
0  \\
-\sum_{\beta = 1}^{\alpha-1} a^{(1, \alpha- \beta)}(t) a^{(3, \beta)}(t) \\
\sum_{\beta = 1}^{\alpha-1} a^{(1, \alpha- \beta)}(t) a^{(2, \beta)}(t)
\end{array}
\right).
\]
We write $g_\alpha(P)$ for short.  
Now we seek $A_\alpha(t)$ the $T$-periodic solution of the equation
\begin{equation} \label{eq:homEqGeneral}
\frac{d}{dt} A_{\alpha}(t)   - \left(Dg(\gamma(t)) -  \lambda \alpha \mbox{Id}\right)  A_{\alpha}(t)  = g_{\alpha}(P),
\end{equation}
which we refer to as the homological equation for $P$.
Note that this is a linear inhomogeneous first order ordinary differential equation with 
periodic coefficients, and that the right hand side is independent of $A_\alpha$.
Indeed, $g_\alpha$ depends recursively on lower order terms.
The Floquet theory guarantees that our homological equation has a unique periodic solution for each 
$\alpha \geq 2$.  Then we recursively solve the equations to order $N$
and have the approximate solution
\[
P^N(t, s) = \sum_{\alpha = 0}^N A_\alpha(t) s^\alpha.
\]

\begin{remark}[A-posteriori error analysis] \label{rem:errors} {\em
Truncation error analysis is treated carefully in \cite{robertoII}.  Note that 
the analysis in that reference is independent of the basis used to represent $A_\alpha(t)$: 
only the implementation exploits that these functions are given as Fourier series.
Then the methods of the work just cited apply directly to the expansions used in the present work. 
The key to the analysis in \cite{robertoII} is that the approximation $P^N$ have small defect.
In the present work we only check the defect numerically, and postpone to an 
upcoming work more careful analysis of the errors for our Chebyshev-Taylor approximations.  }
\end{remark}

\bigskip

The calculations above generalizes to any polynomial vector field in the obvious way, and we have that 
the $A_\alpha$ satisfy homological equations of exactly the form 
given in Equation \eqref{eq:homEqGeneral}.  Only 
the term $Dg(\gamma(t))$ and
the form of the recursive functions $g_\alpha(P)$
depend explicitly on the form of the vector field $g$.
In the other examples in the present work we simply write down the correct homological 
equations and leave the derivations as an exercise for the interested reader.

\begin{figure}
    \centering
    \subfigure{{\includegraphics[width=.35\textwidth]{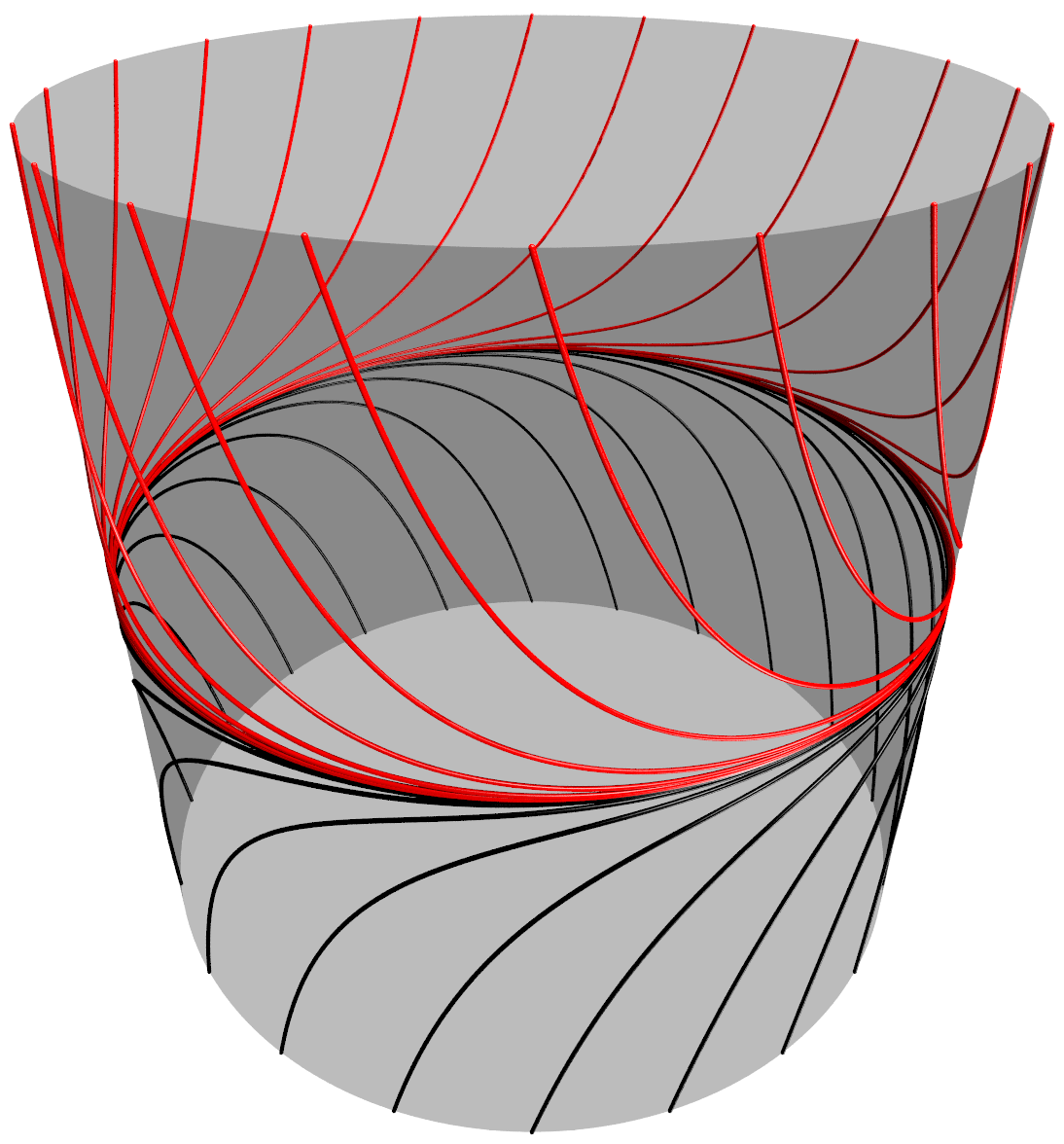} }}
    \subfigure{{\includegraphics[width=.60\textwidth]{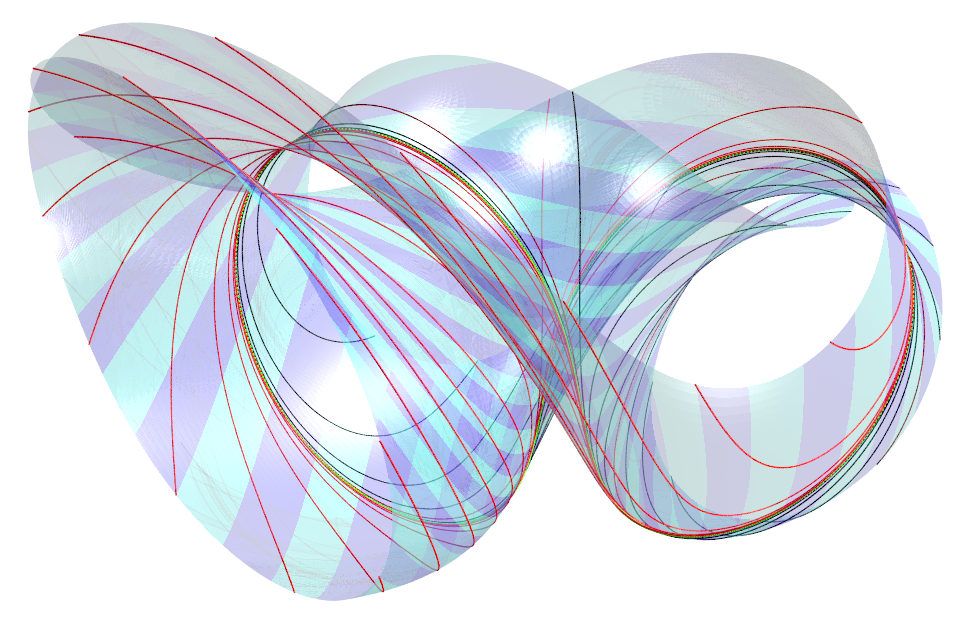} }}
    \caption{The Periodic orbit AB of the Lorenz system with classical parameters
     (green) and a Parametrized local stable manifold (light and dark blue).
     Each color represent a different subdomain 
     for the Chebyshev series.  The conjugacy describing the dynamics is illustrated by
    flowing a number of orbits near the boundary of the cylinder. 
     When these orbits are mapped by $P$ to the phase 
     space, we see the more complicated and nonuniform embedding of the 
     parameterization.  Compare to the schematic given in Figure \ref{fig:conjSketch}.
      }\label{fig:ABstableManifoldWithOrbits}
\end{figure}

%
%
%

\subsection{A dynamical perspective on automatic differentiation} \label{sec:autoDiff}
The term automatic differentiation refers to a whole suite of methods 
for managing the complexity of problems involving high order 
derivatives.  These methods exploit the fact that differentiation and multiplication
are related through the chain rule.  
For a general introduction and an overview of the literature 
we refer to \cite{MR1431038, MR2204531}, though in the present work our 
use of the term is much closer to that of Chapter $2$ of 
\cite{MR3467671} and also \cite{MR2146523}.  See also Chapter  $4.7$ of 
\cite{MR3077153}.  The discussion of the literature in this last reference is 
especially illuminating, though the reference does not use the 
``automatic differentiation'' terminology.   

While automatic differentiation can be viewed as a technique for computing 
transcendental functions of polynomials (where the result should again be expressed as a 
polynomial), automatic differentiation can also bee seen as a 
technique transforming non-polynomial into polynomial problems by appending
additional polynomial differential equations.  The solutions of the 
appended differential equations are required to give 
the non-polynomial terms of the original problem.  
When viewed this way it is possible to use automatic differentiation to solve
problems involving Fourier and Chebyshev rather than just Taylor bases.
Such extensions are discussed for example in \cite{Ransford}, where 
automatic differentiation is simply viewed as a change of variables which 
transforms the given problem to a polynomial problem.  

In general a change of variables will disturb the orbit structure of a differential equation, 
unless the change of variables induces a dynamical (semi)-conjugacy.
This issue is considered at length in \cite{shaneJay}, 
where a dynamical systems interpretation of automatic differentiation is introduced.
We adopt this interpretation here as well.  So, 
suppose that $U \subset \mathbb{R}^d$ is an open set and that 
$f \colon U \to \mathbb{R}^d$ is a real analytic vector field with non-polynomial nonlinearity.
From the perspective of the present work, the goal of automatic differentiation 
is to find a mapping $R \colon U \to \mathbb{R}^D$ with $D > d$ and a polynomial 
vector field $F \colon \mathbb{R}^D \to \mathbb{R}^D$
having that 
\[
\pi_d(R(\mathbf{x})) = \mathbf{x}, 
\]
where $\pi_d \colon \mathbb{R}^D \to \mathbb{R}^d$ is the 
projection into the lower dimensional space, and that 
\begin{equation}\label{eq:autoDiff}
DR(\mathbf{x}) f(\mathbf{x}) = F(R(\mathbf{x})), 
\quad \quad \quad \quad \mbox{for all } \mathbf{x} \in U.
\end{equation}
This leads to the following observations.
\begin{itemize}
\item Since $\pi_d R = \mbox{Id}_d$, it follows that 
\[
\pi_d F(R(\mathbf{x})) = f(\mathbf{x}),
\]
so that the original field is recovered by projection.  
\item Equation \eqref{eq:autoDiff} is an infinitesimal conjugacy, and 
implies that $R$ maps orbits of $f$ to orbits of $F$.
Moreover it says that graph of $R$ is invariant under the flow generated by $F$. 
\item Then when we solve the differential equations given by $F$ with initial 
conditions on the graph of $R$, we can recover the orbits of $f$ simply by projecting.
\end{itemize}
The procedure for choosing $R$ and $F$ is best illustrated through examples,
but it is worth noting that $R$ will have the same domain as $f$.  So even though 
$F$ is polynomial and hence entire, the composition of $F \circ R$ will have the 
same singularities as $f$.

\medskip

\noindent \textbf{Example:}
Consider the Kepler problem 
\begin{align*}
x' &= y \\
y' &= -\frac{M}{|x|^3} x.
\end{align*}
We introduce the new variable 
\[
z = \frac{1}{|x|} = \frac{1}{\sqrt{x^2}},
\]
and note that away from the origin one has 
\[
z' = \frac{d}{dt} (x^2)^{-1/2} = \frac{-1}{2} \left(x^2 \right)^{-3/2} 2x' = - z^3 y.
\]
Then taking 
\[
R(x,y) = \left(
\begin{array}{c}
x \\
y \\
\frac{1}{|x|}
\end{array}
\right),
\quad \quad \quad \mbox{and} \quad \quad \quad 
F(x,y,z) = \left(
\begin{array}{c}
y \\
-  M z^3 x \\
- z^3 y
\end{array}
\right),
\]
one easily checks the conjugacy Equation \eqref{eq:autoDiff}, 
and also that the first two components of $F \circ R$ recover $f$.
Solution curves of the vector field $F$ with initial conditions on the graph of 
$R$ recover solutions of the Kepler problem as long as they remain on the graph of $R$. 
That is, if 
\[
\left(
\begin{array}{c}
x' \\
y' \\
z'
\end{array}
\right) = F(x,y,z),
\quad \quad \quad \mbox{and } 
\left(
\begin{array}{c}
x(0) \\
y(0) \\
z(0)
\end{array}
\right) = R(x_0, y_0),
\]
then $x(t), y(t)$ is a solution of the Kepler problems as long as $x(t), y(t), z(t)$
is on the graph of $R$.  Moreover, the trajectory leaves the graph of $R$ if 
and only if there is a collision in the the Kepler problem.  This example is 
typical of automatic differentiation for problems in celestial mechanics.  
A thorough discussion -- from the dynamical systems point of view --
of automatic differentiation for the circular restricted four body  
problem is found in  \cite{shaneJay}.

\section{Chebyshev Expansion for Periodic Solutions of $\dot u(t)=h(u(t),t)$} \label{Sec:ChebyshevExpansion}
The periodic orbit $\gamma$, 
its stable/unstable normal bundle $v(t)$, and the higher order 
Taylor coefficients $A_\alpha(t)$ for $2 \leq \alpha \leq N$ are all periodic solutions of 
non-autonomous differential equations
of the form
\begin{align*}
\dot{u}(t)= h(u(t), t),
\end{align*}
with period $\tau$, and where $h:\R^M \times \R \to \R^M$. In each case 
we look for a solutions $v: [0, \tau] \to \R^M$ satisfying the boundary value problem
\begin{align}\label{eq:BVPperiodicOrbit}
\begin{cases}\dot{v}(t) = h(v(t),t), \\
v(0)=v(\tau).
\end{cases}
\end{align}
We develop a Chebyshev scheme to solve this class of problems.

To begin, break the solution into sub-pieces using the following mesh. Let $0=t_0<t_1<t_2<\hdots < t_D= \tau$,
and for $i=1,\hdots,D$ define $v^i(t)$ on $[t_{i-1},t_i]$ such that
\[
v^i(t)=v(t).
\]
Thus, each $v^i(t)$ is a solution of $\dot u(t)=h(u(t), t)$. Moreover, at each of the points $t_i$, $1\leq i \leq D-1$, 
two different pieces are defined and they must agree.  Then we impose the
boundary conditions
\begin{align}\label{eq:BVPsubdomain}
\begin{cases}
 \dot v^i(t) = h(v^i(t),t), & \\
 v^i(t_{i-1})= v^{i-1}(t_{i-1}), 
\end{cases}
\end{align}
for $i\neq 1$, and to impose periodicity 
\begin{align*}
\begin{cases}
 \dot v^1(t) = h(v^1(t),t), & \\
 v^{1}(t_{0})= v^{D}(t_D).
\end{cases}
\end{align*}

We want to expand each piece $v^i(t)$ using Chebyshev polynomials. To do so, we first rescale 
the problem to the interval $[-1,1]$. First, note that since there are no time dependence in 
$g$ (the original system for which we compute the periodic orbit) we can translate the time 
domain to some interval $[-L_i,L_i]$ and then rescale time $t \mapsto \frac{t}{L_i}$, so that 
the solution satisfies
\begin{align}\label{eq:ODEforSubPiece}
\dot{v}^i(t) = L_i h(v^i(t),t),
\end{align}
for all $i=1,\hdots,D$.

For the time varying case, i.e. the case of the bundles and the homological equations,
the solution will depend on the lower power $\alpha$ of the coefficient in 
question.  Since the bundles and Taylor coefficients of the parameterization 
are all periodic with \textit{the same period}, we choose a fixed mesh for all the 
problems. That is, the number of subdomains $D$ will be the same at every step. 
Moreover each $L_i$ is a fixed proportion of the global period. 
So $L_i=p_iL$ for $p_i\in(0,1]$ some given constant. 
In applications unless otherwise specified we use a uniform mesh, so that $p_i=\frac{1}{D}$ for all $i$.

We introduce a Chebyshev expansion for each sub-piece $v^i$ that are now defined on 
$[-1,1]$. Let $v^{(i,j)}:[-1,1] \to \R$ denote the $j-$th component of $v^i$ for all $i=1,\hdots,M$. 
For any $i=1,\hdots,D$ and $j=1,\hdots,M$, we set
\begin{align} \label{eq:ChebyExpansion}
  v^{(i,j)}(t) \bydef a_0^{(i,j)} + 2\sum_{k=1}^\infty a_k^{(i,j)} T_k(t), 
\end{align}
where $T_k$ is the $k-$th Chebyshev polynomial, which are defined as follows.
%
%
%
%
%
%
\begin{definition}
  The Chebyshev polynomials $T_k : [-1,1] \rightarrow \mathbb{R}$, $k=0,1,2,\dots$ are defined by $T_0(t)=1$ and 
  $T_1(t)=t$ and the recurrence relation
  \begin{equation*}
   T_{k+1}(t) = 2tT_k(t)-T_{k-1}(t), \quad k \geq 1.
  \end{equation*}
It is well known that these polynomials satisfy $T_k(\cos\theta) = \cos(k\theta)$, 
a fact which can be used to prove further results relating the 
Chebyshev series to results from Fourier analysis.
\end{definition}

\begin{definition}\label{def:ChebyshevUnknown}
We denote the set of unknown Chebyshev coefficients of the full periodic orbit $v$ by $$A=\left( A^{(1)},A^{(2)},\hdots,A^{(D)}\right).$$ So that for every $i=1,2\hdots,M$, $A^{(i)}$ represent the set of coefficients of the $i-$th piece of $v$ defined on the interval $[t_{i-1},t_i]$. Moreover, for a fixed $k$, $A_k^{(i)} \in \R^M$ since each has an image included in $\R^M$, which are all expanded using distinct Chebyshev expansions. Finally, $a_k^{(i,j)}$ denotes the $k-$th coefficient of the $j-$th dimension of $v^i$.
\end{definition}

\noindent To rewrite the system as an operator defined on the set $A$
we integrate \eqref{eq:ODEforSubPiece} from $-1$ to $t$ and obtain
\begin{align}\label{eq:IntegralForm}
v^i(t)-v^i(-1)= L_i\int_{-1}^t h(v^i(s),s) ds.
\end{align}

Note that $h:[-1,1] \to \R^M$, since it depends on the lower and current term of the expansion of the parameterization which all are defined on $[-1,1]$ after rescaling time. Thus it can also be expanded using Chebyshev polynomials. We set
\[
h^j(v^i(t),t)= c_0^{(i,j)} + 2\sum_{k=1}^\infty c_k^{(i,j)} T_k(t), 
\]
and substitute both Chebyshev expansions in \eqref{eq:IntegralForm} to get an equation whose only time dependence is in the Chebyshev polynomials themselves/the integral. We use the recurrence formulas for the integral of the Chebyshev polynomials and rewrite the initial condition so that after simplification the Chebyshev coefficients need to satisfy a set of conditions defined in the space of Chebyshev coefficients. That is, for all $i=1,\hdots,D$, $j=1,\hdots,M$ and $k\geq 0$, we 
define
\[
f_k^{(i,j)}(L,A)=0.
\]
Each $f_k^{(i,j)}(L,A)$ is given by
\begin{align} \label{eq:ChebyProblem}
f_k^{(i,j)}(L,A) \bydef \begin{cases}
\displaystyle \left(a_{0}^{(i-1,j)} + 2\sum_{l=1}^\infty a_{l}^{(i-1,j)}\right) -\left( a_{0}^{(i,j)} + 2\sum_{l=1}^\infty a_{l}^{(i,j)}(-1)^l \right), &\mbox{if}\; k=0 \\
2ka_{k}^{(i,j)} + L_i c_{k\pm 1}^{(i,j)}, &\mbox{if}\; k>0.
\end{cases}
\end{align}
\begin{definition}
We write $c_{k\pm1}$ to denote $c_{k\pm1} := c_{k+1}-c_{k-1}$. 
\end{definition}
\noindent We omit the derivation of the operators $f_k^{(i,j)}$. 
For further details, and to see why the term $k=0$ does not depend 
on the vector field but only on the boundary condition, 
we refer to \cite{LessardReinhardt}.

The boundary condition provides the initial component for the new problem, that is for 
the case $f_0^{(i,j)}$. So we use the form given in Equation \eqref{eq:ChebyProblem} 
for every subintervals except for the case $i=1$, for which we use
\begin{align} \label{eq:periodicity}
f_0^{(1,j)}(L,A) =\left(a_{0}^{(D,j)} + 2\sum_{l=1}^\infty a_{l}^{(D,j)} \right) -
\left( a_{0}^{(1,j)} + 2\sum_{l=1}^\infty a_{l}^{(1,j)}(-1)^l\right).
\end{align}

The operator involves coefficients of the Chebyshev expansion of $h$, 
which we need to write in terms of the unknowns $A$. Since the Chebyshev expansion of a sum (or difference) of two functions $f$ and $g$ will be given by the sum (or difference) of the Chebyshev coefficients of $f$ and $g$, and since we assume $h$ to be polynomial, the only other case to handle is a product. We use the following Lemma.
\begin{lemma}
If $f(t)$ and $g(t)$ are expanded with Chebyshev series so that
\[
f(t)= a_0 + 2\sum_{k=1}^\infty a_k T_k(t) \quad \mbox{and} \quad g(t)= b_0 + 2\sum_{k=1}^\infty b_k T_k(t),
\]
then $f(t)g(t)= (a\ast b)_0 + 2 \displaystyle \sum_{k=1}^\infty (a\ast b)_k T_k(t)$. 
Here $\ast$ denotes the discrete convolution product
\[
(a\ast b)_k = \sum_{\substack{k_1+k_2= k \\ k_1,k_2 \in \Z}} a_{|k_1|}b_{|k_2|}.
\]
\end{lemma}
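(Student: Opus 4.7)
The cleanest route is to reduce the Chebyshev convolution to the standard Fourier convolution by exploiting the identity $T_k(\cos\theta)=\cos(k\theta)$ and a symmetric extension of the coefficient sequences to all of $\mathbb{Z}$. The factor of $2$ in the series \eqref{eq:ChebyExpansion} is precisely what makes this symmetrization clean, which is why it was chosen.

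First, I would substitute $t=\cos\theta$ in each series to obtain
\[
f(\cos\theta)=a_0+2\sum_{k=1}^\infty a_k\cos(k\theta),\qquad g(\cos\theta)=b_0+2\sum_{k=1}^\infty b_k\cos(k\theta).
\]
Extending the coefficient sequences by $a_{-k}:=a_k$ and $b_{-k}:=b_k$ for $k\geq 1$, the identity $2\cos(k\theta)=e^{ik\theta}+e^{-ik\theta}$ lets us rewrite each expression as a bilateral Fourier series:
\[
f(\cos\theta)=\sum_{m\in\mathbb{Z}}a_{|m|}e^{im\theta},\qquad g(\cos\theta)=\sum_{n\in\mathbb{Z}}b_{|n|}e^{in\theta}.
\]

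Next I would multiply the two series formally and regroup by total frequency:
\[
f(\cos\theta)g(\cos\theta)=\sum_{k\in\mathbb{Z}}\Bigl(\,\sum_{\substack{m+n=k\\ m,n\in\mathbb{Z}}}a_{|m|}b_{|n|}\Bigr)e^{ik\theta}=\sum_{k\in\mathbb{Z}}(a\ast b)_k\, e^{ik\theta},
\]
where $(a\ast b)_k$ is exactly the convolution defined in the statement. Because the symmetric extensions satisfy $a_{|m|}=a_{|-m|}$ and likewise for $b$, the sequence $(a\ast b)_k$ is even in $k$. Collapsing the bilateral sum back to a cosine series and returning to the variable $t=\cos\theta$ via $T_k$ therefore yields
\[
f(t)g(t)=(a\ast b)_0+2\sum_{k=1}^\infty (a\ast b)_k\,T_k(t),
\]
which is the claimed identity.

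The only real obstacle is the rearrangement of the double sum, which requires absolute convergence. For the applications in this paper the coefficient sequences $\{a_k\},\{b_k\}$ come from real-analytic periodic functions, so they decay geometrically and Fubini/Tonelli applies without issue. If one wanted the statement under the minimal hypothesis $\{a_k\},\{b_k\}\in\ell^1$, the same argument works since $\ell^1\ast\ell^1\subset\ell^1$. Alternatively the entire identity can be proved by induction using the elementary product rule $2T_m(t)T_n(t)=T_{m+n}(t)+T_{|m-n|}(t)$, which is the Chebyshev form of the cosine product-to-sum formula; this avoids any convergence discussion but requires more bookkeeping on the factors of two.
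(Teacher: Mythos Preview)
Your proof is correct. The paper does not actually prove this lemma; it is stated without proof, with only the earlier remark that $T_k(\cos\theta)=\cos(k\theta)$ ``can be used to prove further results relating the Chebyshev series to results from Fourier analysis.'' Your argument is precisely the natural realization of that hint, so it is entirely in line with what the authors had in mind.
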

Therefore, we can completely rewrite each $f_k^{(i,j)}$ without the coefficients $c_k$. To compute an approximation of the solution, we consider a finite dimension approximation 
\begin{align*}
F= \left\lbrace f_k^{(i,j)} : 1\leq i \leq D, 1 \leq j \leq M, 0 \leq k \leq m-1 \right\rbrace,
\end{align*}
so that $F:\R^{mMD} \to \R^{mMD}$. Note that the boundary conditions introduce a dependence between each subdomain, hence we solve 
the equations simultaneously. In the case $\alpha\geq 2$ one can directly use Newton's method to find $\bar A$ such that $F(\bar A) \approx 0$, where $\bar A$ is a finite dimensional approximation of the unknowns $A$ with same dimension as $F$.

\bigskip

\noindent \textbf{The first order data:}
In the case $\alpha=0$ or $\alpha=1$, we have additional unknowns hence
we also need to add phase conditions.  For the periodic orbit $\gamma$, the period 
$\tau$ is an unknown.  For the eigenvalue equation defining the stable/unstable bundles
it is the eigenvalue/Floquet exponent which is unknown.  In both cases we must balance the 
equations.  



For the periodic orbit $\gamma(t)$, we replace $h(v(t),t)$ by the given vector field $g(v(t))$. 
Note that for any $s$, the time translated solution $\gamma_s(t)=\gamma(t+s)$ 
is still a solution of the problem since it is periodic and satisfies 
$\dot{\gamma_s}(t)= g(\gamma_s(t))$. In order to isolate a solution
we impose a Poincar\'{e} condition
\begin{align}
\dot{p}_0\cdot (p_0-\gamma(0))=0,
\end{align}
for some $p_0 \in \R^M$ with $\dot{p_0} :=g(p_0)$. 
This condition translates into a condition on $ v^1(-1)$ after applying the proper change of variable
to Chebyshev coefficients as
\begin{align}\label{eq:PoincareCondition}
\dot{p_0}\cdot p_0  -\sum_{j=1}^M \dot{v_0}^{(j)} \left(a_{0,0}^{(1,j)} +2\sum_{k=1}^\infty a_{0,k}^{(1,j)}(-1)^{k} \right)=0.
\end{align}

The normal bundle satisfies the eigenvalue problem
\[
\dot v(t) = Dg(\gamma(t))v(t) - \lambda v(t),
\]
where $Dg(\gamma(t))$ is the periodic matrix given by the derivative of $g$ evaluated at the periodic orbit. It follows that any rescaling $k v(t)$ of the bundle is again a solution associated to the same eigenvalue $\lambda$. To isolate a solution we fix $\|v(0)\| = K$, where we are free to 
choose $K$.   In term of the Chebyshev coefficients
\begin{align*}
\sum_{j=1}^M \left(a_0^{(1,j)} + 2\sum_{k=1}^\infty a_k^{(1,j)}(-1)^k\right)^2 - K = 0.
\end{align*}
For simplicity we truncate this condition to
\begin{align}\label{eq:eigenvectorCondition}
\sum_{j=1}^M \sum_{k=0}^{k_0} \left(a_k^{(1,j)}\right)^2 - K = 0,
\end{align}
which still isolates an eigenfunction.

\section{Examples}
We introduce the following operator to simplify the expansion of the operator $F$ in each example.
\begin{definition}
Let $(j_1,\hdots,j_n)$ be a set consisting of hyperscript corresponding to component of the solution. That is $1\leq j_k \leq M$ for all $k=1,\hdots,n$, we denote their Cauchy product of convolutions by
\[
\mathcal{C}_{\alpha,k}^{(i)}(j_1,\hdots,j_n)= \sum_{\substack{\alpha_1+\hdots+\alpha_n=\alpha \\ \alpha_j \in \Z^+ }} \sum_{\substack{k_1+\hdots+k_n=k \\ k_j \in \Z }} a_{\alpha_1,k_1}^{(i,j_1)}a_{\alpha_2,k_2}^{(i,j_2)}\cdots a_{\alpha_n,k_n}^{(i,j_n)}.
\]
Note that the case $\alpha=0$ simply returns the convolution product.
\end{definition}


\subsection{The example of the Lorenz system}
All of our numerical computations use the classical 
parameter values $\beta=\frac{8}{3}$, $\rho =27$ and $\sigma=10$. 
The operator defining the unknowns $A_\alpha(t)$ is given by
\begin{align*}
f^{(i,1)}_{\alpha,k}(A_\alpha) &= 2k a_{\alpha,k}^{(i,1)} +L_i\bigg(-\lambda\alpha a_{\alpha,k\pm 1}^{(i,1)} +\sigma( a_{\alpha,k\pm 1}^{(i,2)}-a_{\alpha,k\pm 1}^{(i,1)}) \bigg) \\
f^{(i,2)}_{\alpha,k}(A_\alpha) &= 2k a_{\alpha,k}^{(i,2)} +L_i\bigg(-\lambda\alpha a_{\alpha,k\pm 1}^{(i,2)} +\rho a_{\alpha,k\pm 1}^{(i,1)} -\cC(2,3) -a_{\alpha,k\pm 1}^{(i,2)} \bigg), \\
f^{(i,3)}_{\alpha,k}(A_\alpha) &= 2k a_{\alpha,k}^{(i,3)} +L_i\bigg(-\lambda\alpha a_{\alpha,k\pm 1}^{(i,3)} +\cC(1,2) -\beta a_{\alpha,k\pm1}^{(i,3)} \bigg),
\end{align*}
with $k\geq 1$. The formula for $k=0$ is omitted since it is already explicitly given in 
\eqref{eq:ChebyProblem}. Note that $\mathcal{C}$ involves the lower order terms $A_\beta$ for $\beta\leq \alpha$ 
but we are only solving for $A_\alpha$ with the lower order terms fixed. 

Only the  $\alpha=0$ problem is nonlinear in the unknown Chebyshev coefficients
and this requires a good initial guess to obtain a periodic orbit. In the 
present work we use the data from \cite{Viswanath} as the input for a Newton method. 
For each $\alpha$ we truncate and consider the finite dimensional problem
\[
\hat A_\alpha = \left\lbrace a_{\alpha,k}^{(i,j)} : 0\leq k \leq m-1, 1\leq i \leq D, 1\leq j \leq 3 \right\rbrace.
\]
The corresponding truncated operator is such that $F:\R^{3mD +1} \to \R^{3mD+1}$ for $\alpha=0,1$ and $F:\R^{3mD} \to \R^{3mD}$ in the remaining cases. 

In Figure \ref{fig:stableLorenz} we computed the stable manifold for two periodic orbits. For these computations we use $D=10$ Chebyshev domains, 
$m=50$ Chebyshev coefficients per domain, and $N=50$ Taylor nodes. 
 The time rescaling in the operator (i.e. half the period) is $L\approx 1.1530$ for the shorter orbit and $L\approx 1.9101$ for the longer one. Each color represent a different subdomain of the Chebyshev time decomposition. 
\begin{figure}
 \begin{center}
  \subfigure{\includegraphics[width=0.44\textwidth]{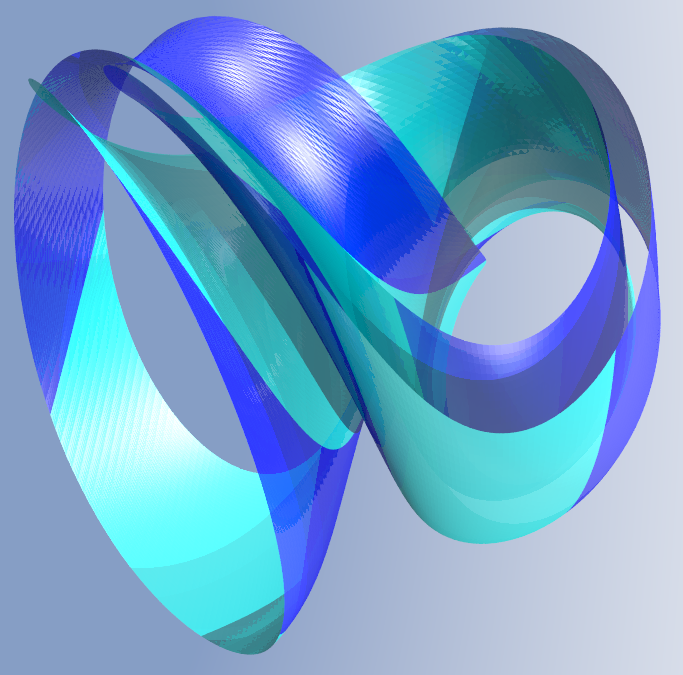}}
  \quad
  \subfigure{\includegraphics[width=0.52\textwidth]{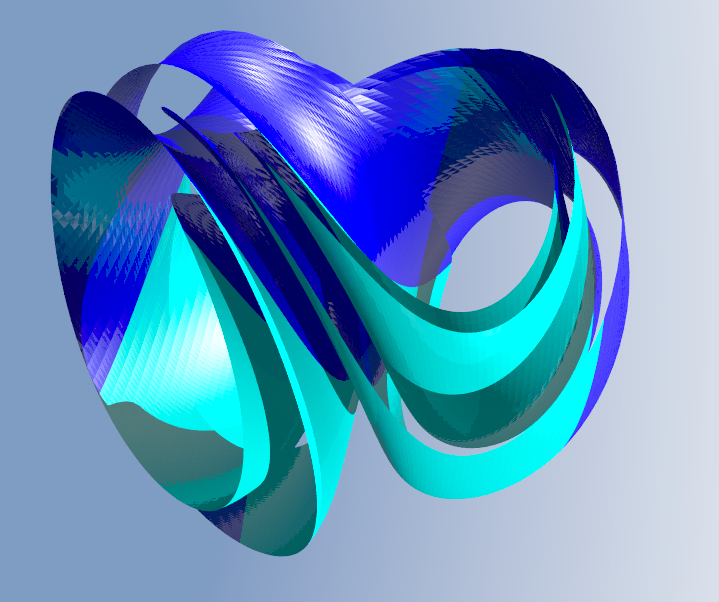}}
  \caption{Periodic orbits ABB (left) and AABBB (right) with their attached local stable manifold.} \label{fig:stableLorenz}
 \end{center}
\end{figure}

In Figure \ref{fig:AABBBcontinued} we extended the manifold on the right of figure \ref{fig:stableLorenz} by integrating backward in time $100$ points evenly distributed on the boundary of the parameterization. The orbits were computed by integrating backward in time for $t=0.5$. Using the conjugacy relation \eqref{eq:FlowConjugacy}, we have that one would need to integrate for $t=2.5212$ to go from $\sigma \approx 10^{-16}$ to the boundary $\sigma=1$. Thus, allowing the utilization of much smaller time lapse to get a good extension of the attractor for this orbit.
The discretization of the continued manifold is very coarse, but the example is only included to show
that nearly all the ``slow'' dynamics of the manifold is captured by the parameterization.  Once we
start integrating the local manifold orbits move away very rapidly.  
\begin{figure}
 \begin{center}
  \includegraphics[width=\textwidth]{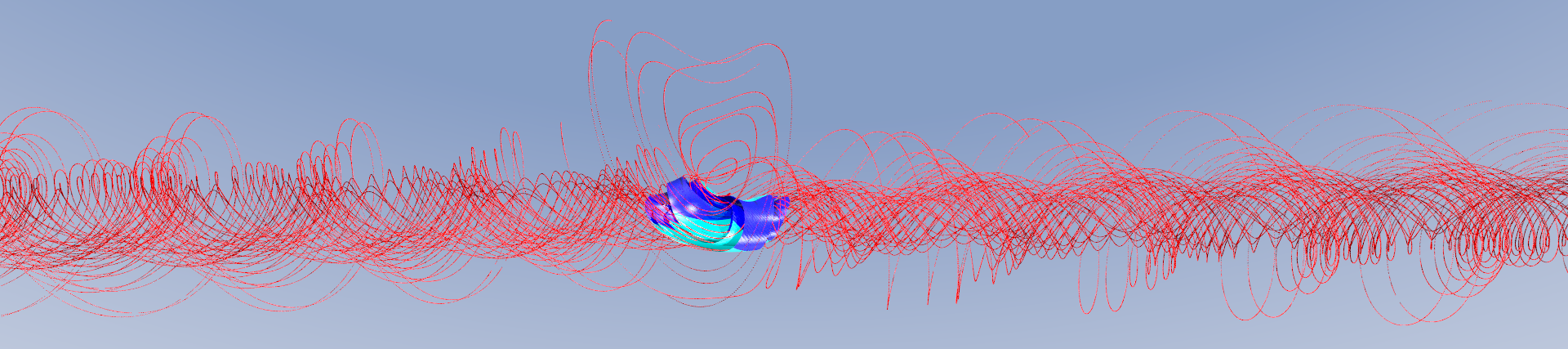}
  \caption{Periodic orbits AABBB and its stable manifold in blue with orbits in red. The manifold displayed in this figure is the same as in figure \ref{fig:stableLorenz}.} \label{fig:AABBBcontinued}
 \end{center}
\end{figure}

\bigskip

\noindent \textbf{A-posteriori error analysis:}
Since computing a local invariant manifold with the parameterization method 
does not necessarily involve a small parameter,  
its not always clear what it means to talk about ``the order'' of the 
error.  This is especially true when polynomials of high degree are used to 
approximate the manifold in a large neighborhood of the periodic orbit.
In this case it is more natural to use the notion of defect to quantify errors,
and we exploit the fact that the parameterization satisfies a conjugacy equation.

That is, since the equation \eqref{eq:FlowConjugacy} must be satisfied, we define
\[
Err(t_0) := \sup_{s \in [0, \tau]} \sup_{\sigma \in [-1,1]} \left\| P(s+t_0,e^{\lambda t_0}\sigma) - \Phi(P(s,\sigma),t_0)  \right\|,
\]
and  $t_0$ is some fixed test time. Sampling points in $[0, \tau]$ with $\sigma = \pm 1$ leads to a 
useful and numerically accessible estimate.

Some heuristics are also helpful.  For example we find that choosing $N$ and $K$ 
so that the norm of the last Taylor component is around machine precision 
leads to excellent results.  A useful norm for making this assessment is 
\[
\|a_N\| = \max_{i=1,\hdots,D}  \sum_{j=1}^3 \sum_{k=0}^{m-1} \left| a_{N,k}^{(i,j)} \right|,
\]
as this involves only sums of the known coefficients.

We test the conjugacy for $t_0= 10^{-5}$ and $t_0=1$ with $200$ different starting points evenly distributed on the parameterization of the manifold and then took the average of the resulting errors. The results are displayed in table \ref{table:ErrorTests}. From this table, one can note that the choice of $N$ and $K$ are such that the conjugacy error remains considerably small for longer period of time while the norm of the last Taylor sequence is not too far beyond machine precision.
\begin{table}
\begin{center}
\begin{tabular}{ |c|c|c| }
\hline
 $N$ & $t_0= 10^{-5}$ & $t_0=1$  \\
\hline
$20$ &  $9.1859\times 10^{-4}$ & $4.9280\times 10^{-1}$ \\
$40$ &  $4.2205\times 10^{-5}$ & $1.2568\times 10^{-2}$ \\
$60$ &  $1.2725\times 10^{-6}$ & $2.6406\times 10^{-4}$ \\
$80$ &  $3.1854\times 10^{-8}$ & $8.0463\times 10^{-6}$ \\
$100$ & $7.2073\times 10^{-10}$ & $3.404\times 10^{-8}$ \\
\hline
\end{tabular}
\quad \quad
\begin{tabular}{ |c|c|c| }
\hline
 $K$ & $\|a_{100}\|$  \\
\hline
$50$  & $7.4728\times 10^{-55}$  \\
$100$ & $8.4136\times 10^{-40}$ \\
$150$ & $5.3647\times 10^{-31}$ \\
$200$ & $9.4729\times 10^{-25}$ \\
$250$ & $6.6372\times 10^{-20}$ \\
\hline
\end{tabular}
\end{center}
\caption{Error for the conjugacy test for different Taylor dimension when $m=10$, $D=50$, $K=250$ and $k_0=10$ (left). Norm of the last Taylor dimension for different choices of $K$ when $m=10$, $D=50$, $N=100$ and $k_0=10$ (right). }\label{table:ErrorTests}
\end{table}

\subsubsection{Short Connecting orbit} \label{sec:shortConnection}
Following the convention established in \cite{MR3207723} we say that there is a 
short connection from $\gamma_1$ to $\gamma_2$ if the local parameterization
of the unstable manifold of $\gamma_1$ intersects the local parameterization of the 
stable manifold of $\gamma_2$.  (Here we are assuming that $\gamma_{1,2}$ 
come equipped with some choice of local manifold parameterizations.  In fact it is more correct 
to say that the connection is short relative to these fixed parameterizations).
In this case one can establish the existence of a 
connection without the use of any numerical integration.  The fact this is possible 
provides another illustration of the fact that our manifold computations
are in some sense not local.

\begin{figure}
 \begin{center}
  \includegraphics[width=0.6\textwidth]{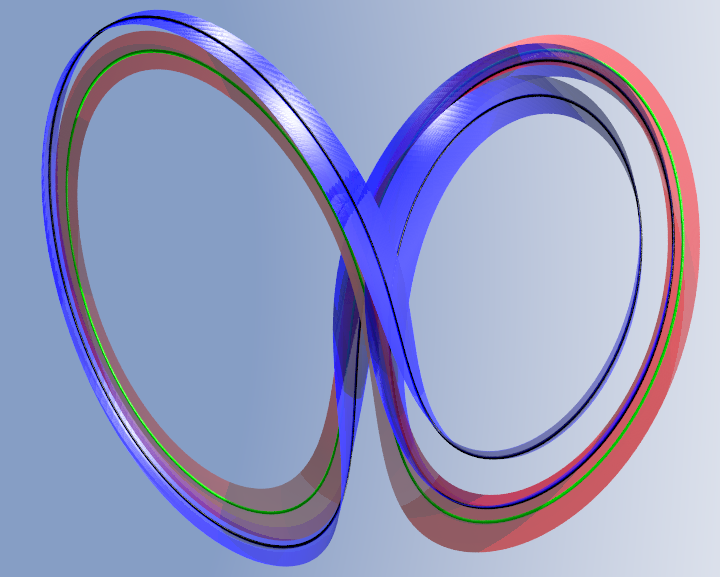} 
  \caption{Unstable manifold of the orbit AB (red) and stable manifold of the ABB orbit (blue). Both manifolds are computed using $m=100$, $D=6$, $N=10$, $K=10$ and $k_0=10$.
  One sees that the local manifolds intersect near the middle of the figure.} \label{fig:AB_ABB}
 \end{center}
\end{figure}

In Figure \ref{fig:AB_ABB} we display the stable manifold of the orbit AB and the unstable manifold of ABB
in Lorenz and observe several intersection of the two manifolds.
The boundary of the unstable manifold crosses the stable manifold
giving a connecting orbit. Let $P:[0,2L_1]\times[-1,1] \to \R^3$ denote the parameterization of the local stable manifold of the periodic orbit AB, whose period is $2L_1$. Similarly, let $Q:[0,2L_2]\times[-1,1] \to \R^3$ be the parameterization of the unstable manifold of the periodic orbit ABB, whose period is given by $2L_2$. Once the unstable manifold is restricted to one of its boundary circles only $3$ unknowns remain. We set $x=(\theta_s,\sigma_s,\theta_u)$ so that the desired intersection is a zero of
\[
S(x)= P(\theta_s,\sigma_s)-Q(\theta_u,1).
\]
Therefore $S:\R^3 \to \R^3$ and it is possible to apply Newton's method to obtain an approximation of the solution. Using this approach, we found
\[
x\approx (1.942170529091222, 0.000000560679355,1.253373698262391).
\]
Then the connecting orbit can be computed from those coordinates in the parameter space. Since the value $\sigma_s$ of $x$ is already quite small we used the conjugacy relation \eqref{eq:FlowConjugacy} with $t=2$ and it is already sufficient. 
For the other half of the connecting orbit, we use \eqref{eq:FlowConjugacy} again to determine the ``time of flight''. 
In this case, the unstable eigenvalue is $\lambda\approx 0.9947$, so one would need to integrate backward in time for $t \approx 34.7246$ to obtain $\sigma$ smaller than $10^{-15}$. 
Lets stress this point again: computing this orbit using the linear approximation of the stable/unstable manifolds would 
require almost $35$ units of time integration to cross from the unstable to the stable normal bundles. Using 
the parameterized manifolds no integration is necessary and the entire orbit is represented ``locally''.

These results are displayed in figure \ref{fig:FlowOfshortConnecting}. The green curve is the one obtained using the stable manifold while the trajectory in red correspond to the one using the unstable manifold and the time $t=34.7246$.
\begin{figure}
    \centering
    \subfigure{{\includegraphics[width=.35\textwidth]{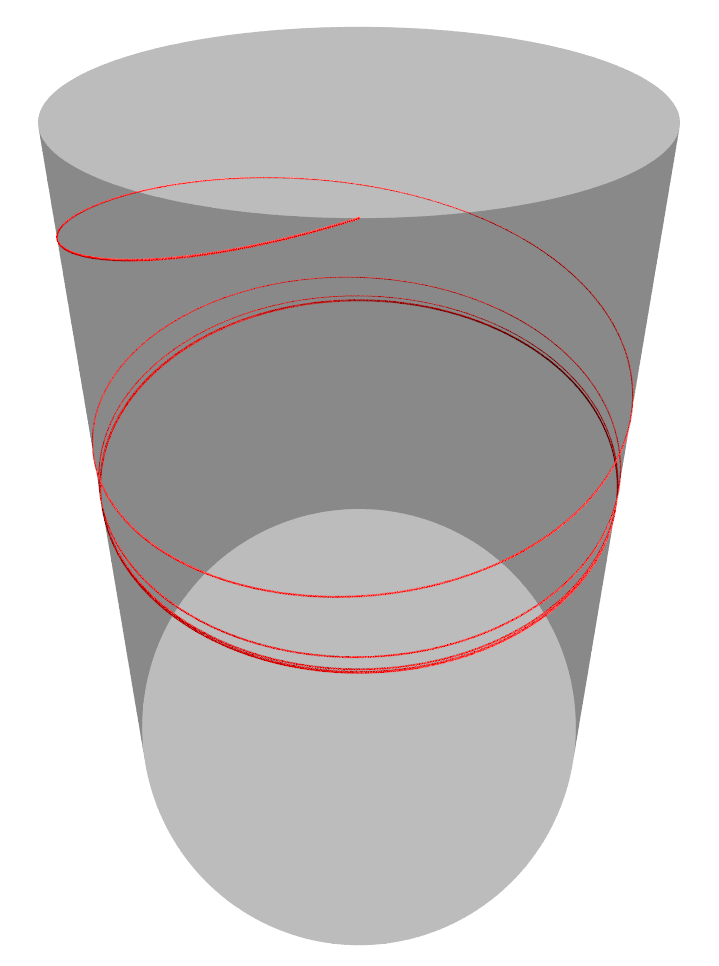} }}
    \subfigure{{\includegraphics[width=.60\textwidth]{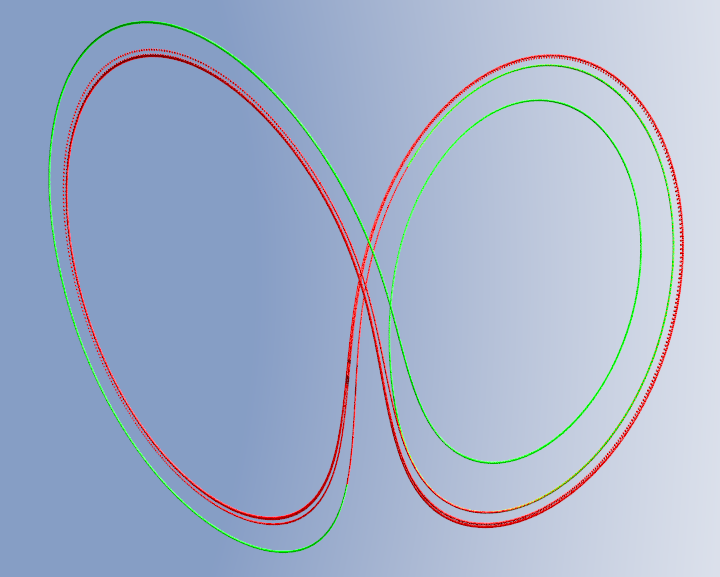} }}
    \caption{On the left, trajectory of the orbit in the parameter space of $Q$ under the linear flow given 
    by the Floquet exponent. On the right, image under $P$ of points for time $t\leq 2$ (green) and image under $Q$ of 
    the trajectory from the left (red).
    Compare to the schematic of Figure \ref{fig:conjSketch}.}\label{fig:FlowOfshortConnecting}
\end{figure}

\begin{figure}
    \centering
    \subfigure{{\includegraphics[width=.7\textwidth]{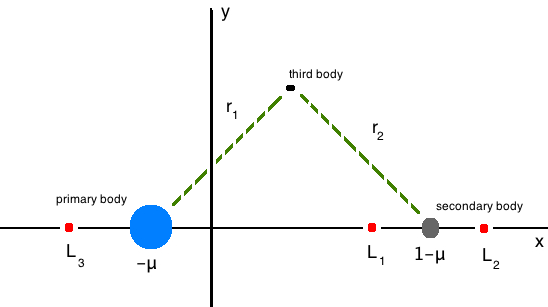} }}
\caption{Schematic representation of the CRTBP: two primary bodies with 
mass ratio $\mu$ move on circular orbits about their center of mass.    
After changing to a co-rotating frame 
the more massive primary is located at $x = -\mu, y = 0$ while
the smaller primary is fixed at $x = 1-\mu, y = 0$.  The CRTBP
studies the motion of a third and massless particle moving in the gravitational 
field of the primaries. The co-linear libration points $\mathcal{L}_j$, $j = 1,2,3$
are shown on the $x$ axis.  The libration points $\mathcal{L}_{4,5}$ form equilateral triangles
with the primaries but play no role in remainder of the discussion hence are not shown 
in the figure.}\label{fig:threeBodies}
\end{figure}

\subsection{The Circular Restricted Three Body Problem} \label{sec:CRTBP}
The circular restricted three body problem (CRTBP) describes the evolution of a 
massless particle moving in the gravitational field of two other massive bodies,  
called the primaries. It is assumed that the primaries move in circular orbits about 
their center of mass.  In this work we focus on the case that the third body moves
in the plane of the primaries. 
In co-rotating coordinates the equations of motion are
\begin{align}\label{eq:CRTBP_second_order_system}
 \begin{cases}
  x''=2y' + \frac{\partial H}{\partial x} \\
  y''=-2x' + \frac{\partial H}{\partial y}
 \end{cases}
\end{align}
with
\[
H(x,y)= \frac{1}{2}(x^2 +y^2) +\frac{(1-\mu)}{\sqrt{(x+\mu)^2 +y^2}} +\frac{\mu}{\sqrt{(x+\mu-1)^2 +y^2}}.
\]
Here $\mu$ is the mass ratio of the primary bodies. 
In the rotating frame:
\begin{enumerate}
\item the center of mass is at the origin,
\item the motion of the primaries is fixed and they sit on the $x-$axis
at $-\mu$ and $1-\mu$.
\end{enumerate}
This choice of coordinates introduces the  Coriolis effect, that is the 
coordinates are non-inertial.  

The CRTBP is much studied as an example in Hamiltonian dynamics and celestial mechanics
going back to the work of Poincar\'{e}.  It is a useful model of the motion of a satellite or astroid
influenced by a two body system such as Earth/Moon, Sun/Earth, or Sun/Jupiter.  It is also one of the 
simplest $N$-body systems which admits chaotic motions, hence is not integrable.   
For further discussion of this and more general $N-$body problems, we refer the reader to the 
books of \cite{MR2029316, MR2391999, MR1867240, MR1705705, MR1870302, MeyerHallOffin}.  See also 
\cite{MR2537575, MR3038565, MR649156, MR2163533, MR2475705, 
MR2086140, MR1878097, MR791842, MR590002, MR3284613, MR2114685}.
This list of references constitutes only the barest introduction to the relevant literature, 
but much more complete discussion is found in the books and papers just cited.

The system has five relative equilibrium points, also referred to as libration points, 
and we denote these by $\mathcal{L}_j$ with $j = 1,2,3,4,5$.  Three of these 
lie on the $x$-axis (the co-linear libration points).  
These are denoted $\mathcal{L}_{1,2,3}$.  See Figure \ref{fig:threeBodies}
for a schematic representation of the problem.

Each of the co-linear libration points are of saddle $\times$ center stability type.  
By an application of the Lyapunov center theorem, each of the centers 
gives rise to one parameter families of periodic orbits.
These are known as Lyapunov families.
We compute invariant manifolds attached to some Lyapunov orbits below.
First we transform to a polynomial problem.  

Rewriting \eqref{eq:CRTBP_second_order_system} as a vector field $\dot u = \hat{g}(u)$ gives 
\begin{equation}\label{eq:CRTBP_non_polynomial}
\hat g(u)=\begin{pmatrix}
 u^2 \\
 2u^4 +u^1 -\frac{(1-\mu)(u^1+\mu)}{\sqrt{(u^1+\mu)^2 + (u^3)^2}^3} -\frac{\mu (u^1-1+\mu)}{\sqrt{(u^1-1+\mu)^2 +(u^3)^2}^3} \\
 u^4 \\
 -2u^2 +u^3 -\frac{(1-\mu)u^3}{\sqrt{(u^1+\mu)^2 + (u^3)^2}^3} -\frac{\mu u^3}{\sqrt{(u^1-1+\mu)^2 + (u^3)^2}^3} \\
\end{pmatrix}.
\end{equation}
This system is non-polynomial and we propose a related polynomial system
through the use of automatic differentiation.  Following 
\cite{jpHalo, shaneJay, Ransford} and the discussion in 
Section \ref{sec:autoDiff} we 
derive the polynomial vector field
\begin{equation}\label{eq:g_CRTBP}
g(v)=\begin{pmatrix}
 v^2 \\
 2v^4 +v^1 -(1-\mu)(v^1+\mu)(v^5)^3 -\mu (v^6)^3(v^1-1+\mu) \\
 v^4 \\
 -2v^2 +v^3 -(1-\mu)v^3(v^5)^3 -\mu v^3(v^6)^3 \\
 -(v^5)^3((v^1+\mu)v^2 +v^3v^4) \\
 -(v^6)^3((v^1-1+\mu)v^2 +v^3v^4)
\end{pmatrix},
\end{equation}
with the additional (initial condition) constraints
\begin{align*}
 v^5(0) &= \frac{1}{\sqrt{(v^1(0) +\mu)^2 +(v^3(0))^2}} \\
 v^6(0) &= \frac{1}{\sqrt{(v^1(0) -1 +\mu)^2 +(v^3(0))^2}}. \\
\end{align*}
That is, we take $R$ defined by 
\[
R(u^1, u^2, u^3, u^4) = 
\left(
\begin{array}{c}
u^1 \\
u^2 \\
u^3 \\
u^4 \\
\frac{1}{\sqrt{ (u^1 +\mu)^2 + (u^3)^2}} \\
\frac{1}{\sqrt{(u^1-1+\mu)^2 + (u^3)^2}}
\end{array}
\right), 
\quad \quad \quad \mbox{and have} \quad 
DR(\mathbf{u}) \hat{g}(\mathbf{u}) = g(R(\mathbf{u})),
\]
where $\mathbf{u} = (u^1, u^2, u^3, u^4)$. 
Then the conjugacy of Equation \eqref{eq:autoDiff} is satisfied.
Moreover, one easily checks that 
$\pi_4(g(R(\mathbf{u}))) = \hat{g}(\mathbf{u})$.
Of course $R$ is defined only on the complement of the collision set of the CRTBP.

Since periodic orbits in the CRTBP occur in one parameter families parameterized by 
energy, we fix $L$ and look for a periodic orbit with this half period. 
Moreover, we use the well known reversible symmetry of the problem 
to formulate a different boundary condition that will still provide periodic 
solutions. For example, orbits in a Lyapunov family of one of the co-linear libration points
have no velocity in the $x$ direction when they cross the $x$ axis -- 
and the time between the two crossing of the $x$ axis are separated by exactly 
the half orbit. So that a periodic solution $\gamma$ 
starting on the $x$ axis with frequency $2L$ will satisfy
\begin{align} \label{eq:BoundaryCondCRTBP}
\gamma_2(0)= 0, \quad \gamma_3(0)=0, \quad \gamma_2\left(2L\right) 
= 0 \quad \mbox{and} \quad \gamma_3\left(2L\right)=0.
\end{align}
\begin{remark}
Note that the last two conditions of \eqref{eq:BoundaryCondCRTBP} could also be $\gamma_2(L)=\gamma_3(L)=0$. 
This would allow us to compute only the half orbit, the symmetry of the problem providing the solution for the second half of the trajectory. The use of Chebyshev expansion would allow such a choice. However, since
the periodic orbit is not the final product of our computations, but only an input
into the higher order equations we compute full orbits in this work.
\end{remark}
For the remaining two conditions, we simply rewrite the initial condition on $v^5$ and $v^6$ 
after imposing the symmetry, so that
\begin{align*}
 \gamma^5(0) = &\frac{1}{|\gamma^1(0) +\mu|}, \\
 \gamma^6(0) = &\frac{1}{|\gamma^1(0) -1 +\mu| }. \\
\end{align*}
Due to the choice of the boundary condition, time translation of the solution does not satisfy this system. 
Thus we drop the Poincar\'{e} condition previously given and obtain a system that is still fully determined and 
with isolated solutions (since we don't solve for the period we dispense with one scalar equation). 
Two possibilities arise from that remark, one could fix $L$ to a given value and still find an orbit as previously mentioned.
 The other choice is to use $L$ as a variable and obtain a fully determined system by adding a condition in which the energy level is fixed to a chosen constant. Such a choice is necessary for example if we want to compute heteroclinic connecting orbits.

We summarize the discussion in the following lemma.

\begin{lemma}\label{Lemma:periodicCRTBP}
Let $v^1(t),v^2(t),v^3(t),v^4(t)$ be periodic function with same period $\omega$ and such that $\dot v^1 = v^2$, 
$\dot v^3=v^4$. Let $v^5(t),v^6(t)$ satisfy
\begin{align} \label{eq:BVPv5}
\begin{cases}
 \dot v^5(t) = -(v^5(t))^3((v^1(t)+\mu)v^2(t) +v^3(t)v^4(t)), & \\
 v^5(0)= \frac{1}{\sqrt{(v^1(0) +\mu)^2 +v^3(0)^2}}, &
\end{cases}
\end{align}
and
\begin{align} \label{eq:BVPv6}
 \begin{cases}
\dot v^6(t) = -(v^6(t))^3((v^1(t)-1+\mu)v^2(t) +v^3(t)v^4(t)), & \\
v^6(0) = \frac{1}{\sqrt{(v^1(0) -1 +\mu)^2 +(v^3(0))^2}}. &
 \end{cases}
\end{align}
Then $v^5(t)$ and $v^6(t)$ are periodic with period $\omega$.
\end{lemma}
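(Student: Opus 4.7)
The plan is to prove $v^5$ periodic by identifying it with the explicit closed-form candidate $w(t) := 1/\sqrt{(v^1(t)+\mu)^2 + v^3(t)^2}$, and then invoking uniqueness of solutions to the initial value problem \eqref{eq:BVPv5}. The analogous argument handles $v^6$ with the candidate $w(t) := 1/\sqrt{(v^1(t)-1+\mu)^2 + v^3(t)^2}$. In effect, this is the observation that motivated the automatic-differentiation lift of Section \ref{sec:autoDiff}: the auxiliary variables are, along the graph of $R$, algebraic functions of the original coordinates.

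First I would differentiate $w$ directly using the chain rule, together with the assumed relations $\dot v^1 = v^2$ and $\dot v^3 = v^4$. A routine computation gives
\[
\dot w(t) = -\tfrac{1}{2}\bigl((v^1+\mu)^2 + (v^3)^2\bigr)^{-3/2}\bigl(2(v^1+\mu)v^2 + 2v^3 v^4\bigr) = -w(t)^3\bigl((v^1+\mu)v^2 + v^3 v^4\bigr),
\]
so that $w$ satisfies the ODE in \eqref{eq:BVPv5}. Since the prescribed initial condition for $v^5$ is exactly $w(0)$, the standard existence and uniqueness theorem for ODEs (applied to the non-autonomous system obtained by freezing $v^1, v^2, v^3, v^4$ as known periodic data) guarantees $v^5(t) = w(t)$ on the common interval of existence.

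With the identification $v^5 = w$ in hand, periodicity is immediate: $w(t+\omega)$ is built out of $v^1(t+\omega)$ and $v^3(t+\omega)$, both of which equal $v^1(t)$ and $v^3(t)$ by assumption, so $w(t+\omega) = w(t)$ and hence $v^5(t+\omega) = v^5(t)$. The argument for $v^6$ is word-for-word identical after replacing $\mu$ by $\mu - 1$ inside the radical.

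The only real obstacle is ensuring that $w$ (equivalently, $v^5$) stays finite for all $t \in [0, \omega]$, i.e.\ that the denominator $(v^1(t)+\mu)^2 + v^3(t)^2$ never vanishes. Geometrically this is just the statement that the $(v^1, v^3)$ orbit never collides with the first primary, which is a standing hypothesis throughout the CRTBP setup; under this assumption the candidate $w$ is a well-defined smooth function on $[0,\omega]$, global existence of the ODE solution is automatic, and the uniqueness argument applies on the whole period. An analogous non-collision condition with the second primary is needed for $v^6$.
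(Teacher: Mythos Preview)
Your proposal is correct and follows essentially the same route as the paper: define the explicit candidate $w(t)=1/\sqrt{(v^1(t)+\mu)^2+(v^3(t))^2}$, verify it solves \eqref{eq:BVPv5}, invoke uniqueness to identify $v^5=w$, and read off periodicity from that of $v^1,v^3$; the argument for $v^6$ is the obvious analogue. Your explicit computation of $\dot w$ and your remark about the implicit non-collision hypothesis are the only additions beyond what the paper writes, and both are appropriate.
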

%
%
%
%
\begin{proof}
We first note that
\[
w^5(t)= \frac{1}{\sqrt{(v^1(t)+\mu)^2 +(v^3(t))^2}}
\]
is a solution of \eqref{eq:BVPv5}. But by unicity of the solution it follows that $v^5(t)=w^5(t)$. Moreover, we have that $w^5(t)$ is periodic with period $\omega$ since $v^1(t)$ and $v^3(t)$ are periodic with period $\omega$. Thus $v^5(t)$ is periodic with period $\omega$, as desired. For $v^6(t)$, the proof is similar using the fact that
\[
w^6(t)= \frac{1}{\sqrt{(v^1(t)-1+\mu)^2 +(v^3(t))^2}}
\]
is a solution of \eqref{eq:BVPv6}.
\end{proof}

To fix the energy we have to use an integral of the CRTBP, namely
\[
E(x,y,\dot x, \dot y) =
x^2 + y^2 + 2\frac{1-\mu}{\sqrt{(x+\mu)^2 +y^2}} + 2\frac{\mu}{\sqrt{(x-1+\mu)^2 +y^2}} -(\dot x^2 + \dot y^2),
\]
known as the Jacobi integral. In our system of coordinates, this is
\[
E(v^1, v^2, v^3, v^4,v^5,v^6) := 
(v^1)^2 + (v^3)^2 +2(1-\mu)v^5 +2\mu v^6 -((v^2)^2+(v^4)^2).
\]
Since $E$ is constant along any orbit of the system, we evaluate it at the endpoint of the first piece of the Chebyshev decomposition. This leads to a new phase condition that can replace the one previously exhibited at equation \eqref{eq:PoincareCondition}. In terms of the Chebyshev coefficients it is given by
\begin{align}
f^{(0)}(A_0) = &\left(a_{0,0}^{(1,1)} + 2 \displaystyle \sum_{k=1}^\infty a_{0,k}^{(1,1)}\right)^2 +\left(a_{0,0}^{(1,3)} + 2 \displaystyle \sum_{k=1}^\infty a_{0,k}^{(1,3)}\right)^2 +2(1-\mu)\left(a_{0,0}^{(1,5)} + 2 \displaystyle \sum_{k=1}^\infty a_{0,k}^{(1,5)}\right) \nonumber \\
 &+2\mu\left(a_{0,0}^{(1,6)} + 2 \displaystyle \sum_{k=1}^\infty a_{0,k}^{(1,6)}\right) -\left(a_{0,0}^{(1,2)} +2\displaystyle \sum_{k=1}^\infty a_{0,k}^{(1,2)}\right)^2 -\left(a_{0,0}^{(1,4)} +2\displaystyle \sum_{k=1}^\infty a_{0,k}^{(1,4)}\right)^2. \label{eq:EnergyCondition}
\end{align}

\noindent We now focus on expanding the operator we need to solve in order to obtain the coefficients of the Chebyshev expansion of the $i-$th component of the solution. In the case $k\geq 1$, each case is given by
\small
\begin{align*}
f^{(i,1)}_{\alpha,k}(A_\alpha)= 2ka_{\alpha,k}^{(i,1)} -L_i\bigg(&\alpha\lambda a_{\alpha,k\pm 1}^{(i,1)} -a_{\alpha,k\pm1}^{(i,2)}\bigg), \\
f^{(i,2)}_{\alpha,k}(A_\alpha)= 2ka_{\alpha,k}^{(i,2)} -L_i\bigg(&\alpha\lambda a_{\alpha,k\pm 1}^{(i,2)} -a_{\alpha,k\pm1}^{(i,1)} -2a_{\alpha,k\pm1}^{(i,4)} +(1-\mu)\cC(1,5,5,5) +\mu\cC(1,6,6,6) \\
&+(\mu -\mu^2)\cC(5,5,5) +(\mu^2-\mu)\cC(6,6,6)\bigg), \\
f^{(i,3)}_{\alpha,k}(A_\alpha)= 2ka_{\alpha,k}^{(i,3)} -L_i\bigg(&\alpha\lambda a_{\alpha,k\pm 1}^{(i,3)} -a_{\alpha,k\pm1}^{(i,4)} \bigg), \\
f^{(i,4)}_{\alpha,k}(A_\alpha)= 2ka_{\alpha,k}^{(i,4)} -L_i\bigg(&\alpha\lambda a_{\alpha,k\pm 1}^{(i,4)} +2a_{\alpha,k\pm1}^{(i,2)} -a_{\alpha,k\pm1}^{(i,3)} +(1-\mu)\cC(3,5,5,5) +\mu\cC(3,6,6,6)\bigg), \\
f^{(i,5)}_{\alpha,k}(A_\alpha)= 2ka_{\alpha,k}^{(i,5)} -L_i\bigg(&\alpha\lambda a_{1,k\pm 1}^{(i,5)} +\cC(1,2,5,5,5) +\mu\cC(2,5,5,5) +\cC(3,4,5,5,5)\bigg), \\
f^{(i,6)}_{\alpha,k}(A_\alpha)= 2ka_{\alpha,k}^{(i,6)} -L_i\bigg(&\alpha\lambda a_{1,k\pm 1}^{(i,6)} +\cC(1,2,6,6,6) +(\mu-1)\cC(2,6,6,6) +\cC(3,4,6,6,6) \bigg).
\end{align*}
\normalsize

\noindent The cases $k=0$ are as given in \eqref{eq:ChebyProblem} for every $i=2,\hdots,D$. In the case $i=1$, for the periodic orbit we use the condition previously given to rewrite the problem as a zero finding of an operator. Those are given by
\begin{align*}
f^{(1,1)}_{0,0}(A_\alpha) &=a_{0,0}^{(D,2)} + 2\displaystyle \sum_{k=1}^\infty a_{0,k}^{(D,2)}(-1)^k, \\
f^{(1,2)}_{0,0}(A_\alpha) &=a_{0,0}^{(D,3)} + 2\displaystyle \sum_{k=1}^\infty a_{0,k}^{(D,3)}(-1)^k, \\
f^{(1,3)}_{0,0}(A_\alpha) &=a_{0,0}^{(1,2)} + 2\displaystyle \sum_{k=1}^\infty a_{0,k}^{(1,2)}, \\
f^{(1,4)}_{0,0}(A_\alpha) &=a_{0,0}^{(1,3)} + 2\displaystyle \sum_{k=1}^\infty a_{0,k}^{(1,3)}, \\
f^{(1,5)}_{0,0}(A_\alpha) &=\left(a_{0,0}^{(1,5)} +2\displaystyle\sum_{k=1}^\infty a_{0,k}^{(1,5)}(-1)^k\right)\left|a_{0,0}^{(1,1)} +2\displaystyle\sum_{k=1}^\infty a_{0,k}^{(1,1)}(-1)^k +\mu \right| -1, \\
f^{(1,6)}_{0,0}(A_\alpha) &=\left(a_{0,0}^{(1,6)} +2\displaystyle\sum_{k=1}^\infty a_{0,k}^{(1,5)}(-1)^k\right)\left|a_{0,0}^{(1,1)} +2\displaystyle\sum_{k=1}^\infty a_{0,k}^{(1,1)}(-1)^k -1 +\mu \right| -1. 
\end{align*}
The operator is now completely determined. In order to approximate the solution we truncate 
the unknowns and the operator a similar way as for the Lorenz system and obtain an operator 
such that $F:\R^{6mD +1} \to \R^{6mD +1}$ in the case $\alpha=0,1$ and such that 
$F:\R^{6mD} \to \R^{6mD}$ in the higher dimensional cases. Again, only the search for the periodic orbit requires a good initial guess to obtain the approximation. Following \cite{Ransford} we get an initial guess on which we applied Newton's method. To present results, we regrouped different orbits with the same level of energy.

Figure \ref{fig:Energy317} illustrates two Lyapunov periodic orbits associated with the libration
points $\mathcal{L}_1$ and $\mathcal{L}_2$. Both orbit have energy $E=3.17$ and have been 
computed using $m=50$ Chebyshev nodes and $D=8$
Chebyshev domains. The periods are $L\approx 1.7122$ for the orbit around $\mathcal{L}_2$ 
and $L\approx 1.4242$ for the orbit around $\mathcal{L}_1$. 
See Figure \ref{fig:Manifold317} for an illustration of the
numerically computed parameterized invariant manifolds.

\begin{figure}
 \begin{center}
  \includegraphics[width=\textwidth]{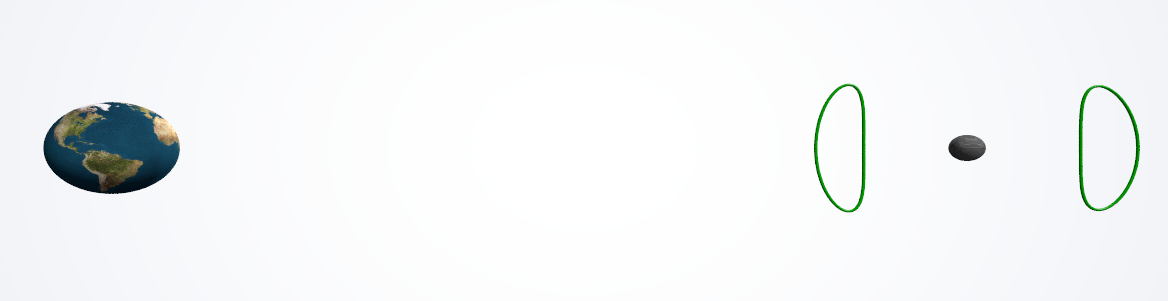}
  \caption{A pair of Lyapunov orbits around the libration points $\mathcal{L}_1$ and $\mathcal{L}_2$. 
  The computation displayed were done in the case of the earth-moon ratio $\mu=0.0123$. Both orbit have energy $3.17$.
  The sizes of the Earth and Moon are not to scale, and the orientation of the Earth chosen for effect rather than accuracy
  (the actual orientation would have us looking down at the North Pole of the Earth).} \label{fig:Energy317}
 \end{center} 
\end{figure}

\begin{figure}
 \begin{center}
  \includegraphics[width=\textwidth]{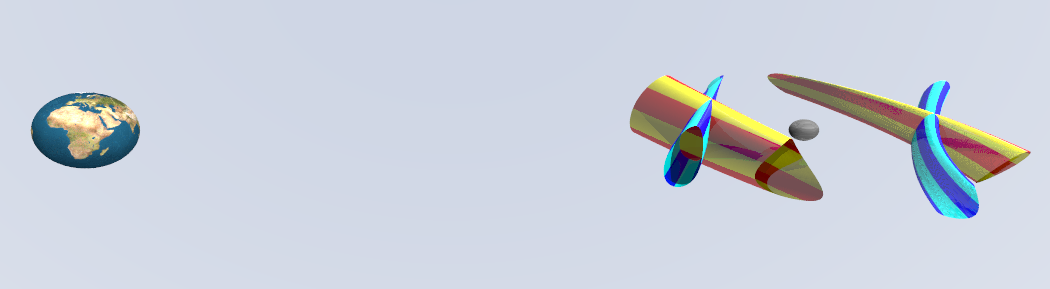} 
  \caption{Stable (light and dark blue) and unstable (red and yellow) manifold for both the Lyapunov 
  orbits already discussed in Figure \ref{fig:Energy317}. Manifolds are computed with $N=10$ Taylor nodes, $m=50$ Chebyshev coefficients
  per $D=8$ Chebyshev domains. The component displayed are $v^1,v^2,v^3$, 
  that is $x,\dot x$ and $y$ in the original system of coordinates (again the sizes of the primaries are not to scale and orientation is 
  chosen for effect).} \label{fig:Manifold317}
 \end{center}
\end{figure}

We must stress that these computations were preformed for the polynomial 
equation $\dot v = g(v(t))$, where $g$ is given by \eqref{eq:g_CRTBP}. To show
that a point laying in an invariant manifolds in the polynomial problem 
corresponds to a point on the manifold for the original CRTBP we use the following Theorem.  
Intuitively this works because of the conjugacy described in 
Equation \eqref{eq:autoDiff}.

\begin{theorem}
Let $P(t,\sigma)$ be a parameterization of the local stable manifold of a given periodic orbit $v(t)$ 
satisfying $\dot v = g(v(t))$, where $g$ is given by \eqref{eq:g_CRTBP}. If $x_0 \in \R^6$ is such 
that $x_0=P(t_0,\sigma_0)$, then the point $y_0 \in \R^4$ given by the first four component of 
$x_0$ is in the stable set of the corresponding periodic orbit of $\dot u = f(u)$, where $f$ is given 
by \eqref{eq:CRTBP_non_polynomial}. 
\end{theorem}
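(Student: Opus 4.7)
The plan is to reduce the statement to the two conjugacies already in hand: the flow conjugacy \eqref{eq:FlowConjugacy} satisfied by $P$, and the automatic-differentiation conjugacy $DR\,\hat g = g\circ R$ from Section \ref{sec:autoDiff}. The key intermediate fact is that the image of $P$ sits on the graph of $R$; once this is established, projecting the $6$D flow onto the first four coordinates recovers an orbit of $\hat g$ that converges to $\hat\gamma := \pi_4 \circ \gamma$.

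First I would verify that the graph of $R$ is invariant under the flow $\Phi_g$ of \eqref{eq:g_CRTBP} and that $\pi_4$ intertwines the two flows on it. This is immediate from $DR\,\hat g = g\circ R$: if $y(t)$ solves $\dot y = \hat g(y)$, then $x(t) := R(y(t))$ satisfies $\dot x = DR(y)\hat g(y) = g(x)$, so by ODE uniqueness $\Phi_g(R(y_0),t) = R(\Phi_{\hat g}(y_0,t))$ wherever both sides are defined. Applying $\pi_4$ and using $\pi_4 \circ R = \mathrm{Id}$ then yields $\pi_4 \Phi_g(R(y_0),t) = \Phi_{\hat g}(y_0,t)$.

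The main obstacle is to show $\mathrm{image}(P) \subset \mathrm{graph}(R)$. My approach is through uniqueness of the parameterization. Let $\hat P \colon [0,T]\times[-1,1] \to \R^4$ be a parameterization of the local stable manifold of $\hat\gamma$ with normal bundle $\hat v$ and Floquet exponent $\lambda$, and set $\tilde P := R\circ \hat P$. A direct chain-rule computation gives
\[
\partial_t \tilde P + \lambda\sigma\,\partial_\sigma \tilde P \;=\; DR(\hat P)\bigl(\partial_t \hat P + \lambda\sigma\,\partial_\sigma \hat P\bigr) \;=\; DR(\hat P)\hat g(\hat P) \;=\; g(\tilde P),
\]
so $\tilde P$ solves the $6$D invariance equation \eqref{eq:InvarianceEquation}. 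Its first-order data are $\tilde P(\cdot,0) = R(\hat\gamma) = \gamma$ and $\partial_\sigma \tilde P(\cdot,0) = DR(\hat\gamma)\hat v$, and differentiating $DR\,\hat g = g\circ R$ along $\hat\gamma$ shows that $DR(\hat\gamma)\hat v$ is again a Floquet eigenfunction with exponent $\lambda$ for the $6$D linearization about $\gamma$. Rescaling $v$ so that $v = DR(\hat\gamma)\hat v$ and invoking the uniqueness of the parameterization from \cite{MR3304254, MR1976079} gives $P = \tilde P$, whence $\mathrm{image}(P) \subset \mathrm{graph}(R)$.

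With the containment in hand the rest is routine. Write $x_0 = P(t_0,\sigma_0) = R(y_0)$ with $y_0 := \pi_4(x_0)$; combining the flow conjugacy with the projection intertwining from the first step yields
\[
\Phi_{\hat g}(y_0,t) \;=\; \pi_4 \Phi_g(x_0,t) \;=\; \pi_4 P(t_0+t, e^{\lambda t}\sigma_0),
\]
which tends to $\pi_4 \gamma(t_0+\cdot) = \hat\gamma(t_0+\cdot)$ as $t\to\infty$ by continuity of $P$ and $\pi_4$. The delicate technical point inside the uniqueness step is the Floquet alignment: one must know that the stable eigenspace of the $6$D monodromy is one-dimensional and spanned by $DR(\hat\gamma)\hat v$, so that the single rescaling of $v$ actually produces $P = \tilde P$. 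A route avoiding uniqueness altogether would be to form the scalar defects $\Delta^{5,6}(t,\sigma)$ between $P^{5,6}$ and the explicit reciprocal-square-root expressions from $R$, and verify directly that they satisfy homogeneous linear transport equations along the characteristics of \eqref{eq:InvarianceEquation} with zero initial data on $\{\sigma = 0\}$.
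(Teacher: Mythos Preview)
Your argument is correct, but it is not the route the paper takes. The paper does \emph{not} invoke the uniqueness theorem for parameterizations, nor does it build an auxiliary $\tilde P = R\circ \hat P$. Instead it argues orbit-by-orbit: given $x_0 = P(t_0,\sigma_0)$, flow it forward by $\Phi_g$ so that $x(t) = P(t_0+t, e^{\lambda t}\sigma_0)$ solves $\dot x = g(x)$; then observe that the scalar ODE governing $x^5$ (respectively $x^6$) in \eqref{eq:g_CRTBP} is also solved by the explicit reciprocal-distance $1/\sqrt{(x^1+\mu)^2+(x^3)^2}$, so the two differ only by a constant of integration; finally, the asymptotic convergence $x(t)\to v$ (which holds because $x_0$ lies on the stable manifold) forces that constant to vanish. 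Once $x^5, x^6$ are identified with the automatic-differentiation expressions, the first four components of $g$ reduce to $\hat g$, and the projection $y(t)=\pi_4 x(t)$ is the desired CRTBP orbit converging to $u=\pi_4 v$.

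The trade-offs are clear. The paper's argument is elementary and self-contained---no appeal to the parameterization uniqueness theorem and no need to worry about Floquet alignment in the $6$D problem---but it establishes the graph constraint only along each forward orbit rather than on the full image of $P$ at once. Your approach is more structural and yields the global statement $\mathrm{image}(P)\subset\mathrm{graph}(R)$ in one stroke, at the cost of importing the uniqueness machinery from \cite{MR3304254} and checking (as you correctly flag) that the $6$D stable Floquet eigenspace is one-dimensional and tangent to $\mathrm{graph}(R)$; this does hold here, since the two transverse directions $\delta^5,\delta^6$ carry Floquet multiplier $1$. Interestingly, the ``defect'' alternative you sketch at the end is much closer in spirit to what the paper actually does, except that the paper pins down the defect via the $t\to\infty$ limit rather than via data on $\{\sigma=0\}$.
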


\begin{proof}
Note that using the same remark as in the proof of Lemma \eqref{Lemma:periodicCRTBP}, 
we obtain from the periodic orbit $v(t)$ a periodic orbit of the CRTBP. That is a periodic solution 
of $\dot u = f(u(t))$. We denote this orbit  by $u(t)$ and we have that $v^i(t)=u^i(t)$ for $i=1,2,3,4$, while
\[
v^5(t)= \frac{1}{\sqrt{(u^1(t)+\mu)^2 + u^3(t)^2}}~\mbox{and}~v^6(t)= \frac{1}{\sqrt{(u^1(t)-1+\mu)^2 + u^3(t)^2}}.
\]
Let $x(t)$ denote the trajectory obtained by flowing $x_0$ forward in time by $\Phi(v,t)$ the flow solution of $\dot v = g(v(t))$. 
So that, using the conjugacy relation \eqref{eq:FlowConjugacy}, we have
\[
x(t)= P(t_0+t,e^{\lambda t}\sigma_0),
\]
where $\lambda$ is the stable eigenvalue associated to the orbit. Moreover, it follows that
\begin{align*}
\dot x(t) &= \frac{\partial}{\partial t} P(t_0+t,e^{\lambda t}\sigma_0) \\
	  &= \frac{\partial}{\partial t} \Phi(x_0,t) \\
	  &= g(x(t)). \\
\end{align*}
Thus, by definition of $g$  
\[
x^5(t)= \frac{1}{\sqrt{(x^1(t)+\mu)^2 +(x^3(t))^2}} +C_1
\]
and
\[
x^6(t)= \frac{1}{\sqrt{(x^1(t)-1+\mu)^2 +(x^3(t))^2}} +C_2.
\]
Here $C_1,C_2$ are arbitrary constants. But $x_0 \in W^s(v)$, so that for any $\epsilon>0$ there exists $T$ such that for all $t\geq T$, we have that
\[
\min_{s\in [0,\omega]} \left| x^5(t) - v^5(s) \right| < \epsilon.
\]
This force $C_1=0$. Similarly, we have that $C_2=0$. Now $x^5$ and $x^6$ in $g$ are rewritten with the first four component so that $g$ reduce to $f$ and $y(t)=(x^1(t),x^2(t),x^3(t),x^4(t))$ satisfies $\dot y(t) = f(y(t))$. Moreover, since $x_0 \in W^s(v)$, we have that $y(t)$ is in the stable set of the corresponding orbit $u(t)$, as desired.
\end{proof}

\subsubsection{Connecting orbits as solutions of boundary value problems}
As in Section \ref{sec:shortConnection} we now use the parametrized manifolds to compute 
connecting orbits between period orbits. 
In the present section we consider heteroclinic orbits for the 
CRTBP and we do not find any short connections.
Instead, we solve a two point boundary value problem with
 the manifolds as boundary conditions.  
 This strategy is standard and is for example discussed in detail in
 \cite{MR1007358,MR1205453,MR2454068,MR2511084}.
 
 The references just cited obtain boundary conditions by 
 projecting onto the linear approximation of the stable/unstable manifolds
 given by the stable/unstable normal bundles associated with the periodic orbit.
 Projecting instead onto high order parameterizations
 can substantially reduce the integration time and 
numerically stabilize the problem.
The behavior of the connecting orbit on the manifold is then recovered 
via the flow conjugacy.
\begin{remark}
In this work we compute connecting orbits by numerically integrating the system. However, one could adapt the approach developed in section \ref{Sec:ChebyshevExpansion} with $\alpha=0$ to compute any orbit solution of a given boundary value problem expanded as Chebyshev series. This has been done in 
\cite{LessardReinhardt, MR3353132, RayJB}
and even leads to computer assisted proofs.  We return to this remark in an upcoming work. 
We also refer to 
\cite{MR1947690, MR2112702, MR3032848, WilczakZgli}
for further reading about 
computer assisted proofs in the CRTBP.
\end{remark}

We let $P$ and $Q$ denote 
parameterizations of the stable and unstable manifolds, and 
seek $(\theta_u,\sigma_u)$, $(\theta_s,\sigma_s)$ 
and an integrating time $T$ -- or ``time of flight'' -- such that
\begin{align}\label{eq:ConnectingOrbitProblem}
\Phi(P_0,T)&= P(\theta_s,\sigma_s),
\end{align}
where $P_0= Q(\theta_u,\sigma_u)$. Equation \eqref{eq:ConnectingOrbitProblem} 
has five unknowns, namely the integrating time and the parameters on both manifold. 
Since orbits of the CRTBP lie in $\R^4$, Equation \eqref{eq:ConnectingOrbitProblem} 
provides only four equations and we have more unknowns than equations.
As a result we cannot expect to isolate a solution.
To remedy the situation we simply fix $\sigma_u=-1$, removing one of the variables. 
This corresponds to a choice of boundary components for the local unstable manifold.
\begin{remark}
 Recall that the energy is constant along solution curves, and it's impossible to find a connecting orbit between two periodic solution with different energy level. We avoided this problem by introducing the energy as the phase condition when we solve for the underlying orbit. Recall the definition of $f^{(0)}$ in \eqref{eq:EnergyCondition}.
\end{remark}

Using Newton's method with the unstable manifold of the orbit on the right in figure \ref{fig:Energy317} and the stable manifold of the orbit on the left we found an approximation of a solution to this problem where
\begin{align*}
(\theta_u,\sigma_u) &\approx (3.086681925168687,-1) \\
(\theta_s,\sigma_s) &\approx (0.065696587097979, 1) \\
 T &\approx 1.654424821513812.
\end{align*}

Both manifolds were computed with $m=50$, $D=8$, $N=50$, $K=5$ and $k_0=10$. To find an initial guess on which
 to apply Newton's method we integrated $40$ points evenly distributed on the boundary of the unstable manifold and 
 observed that some orbits were potentially intersecting the stable manifold. The connecting orbit and the two manifold 
 are displayed in figure \ref{fig:ConnectingCRTBP}.

\begin{figure}
    \centering
    {\includegraphics[width=.5\textwidth]{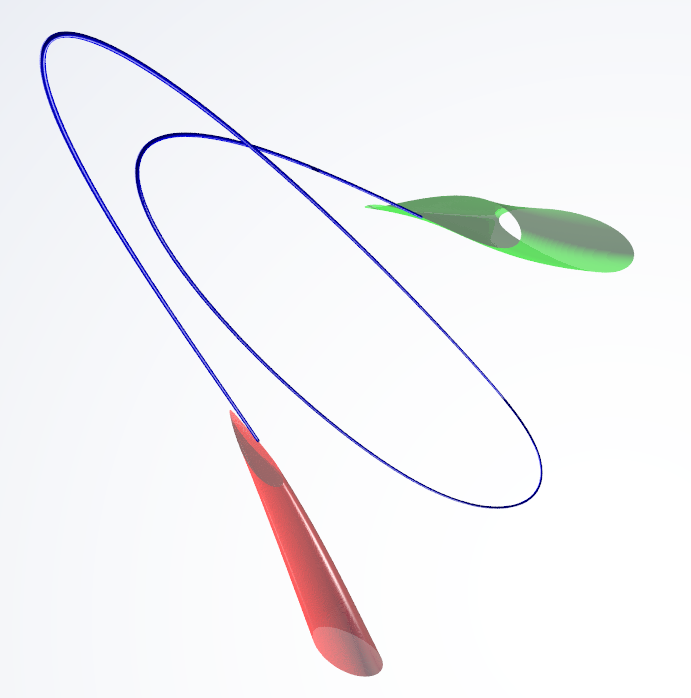} }
    \caption{Connecting orbit between the two 
    Lyapunov orbits in the CRTBP. (The Moon -- not shown -- would be in the middle of the ``loop'').  
    We use both parameterized local manifolds to reduce the problem 
    to a finite time interval.  The resulting finite time interval is much shorter using these high order 
    parameterizations than it would be if we only projected onto the linear approximation. 
    This shortening of the ``time of flight'' has the effect of stabilizing the numerical solution of the BVP.
    The stable manifold is in green while the unstable is in red. The blue curve is
    the heteroclinic connection, which we find by solving a boundary value problem with end points on 
    the parameterizations.}\label{fig:ConnectingCRTBP}
\end{figure}

We also use the conjugacy relation to extend the connecting orbit forward and backward 
on the manifolds.  Integrating in the parameter space until
$\sigma_s \approx 10^{-15}$ takes  
\[
t \approx 12.3688,
\]
and for the backward trajectory 
\[
t \approx 16.1050.
\]
The full trajectory is displayed in figure \ref{fig:ExtendedConnecting}. Note that out of the three pieces of the trajectory, 
only the one in blue was obtained by numerically integrating the system and this piece required 
a time step of only $2$ time units.  Redoing the computation but projecting onto the 
linear approximations would result in a time of flight of roughly $30$ time units.

\begin{figure}
    \centering
    \subfigure{\includegraphics[width=.32\textwidth]{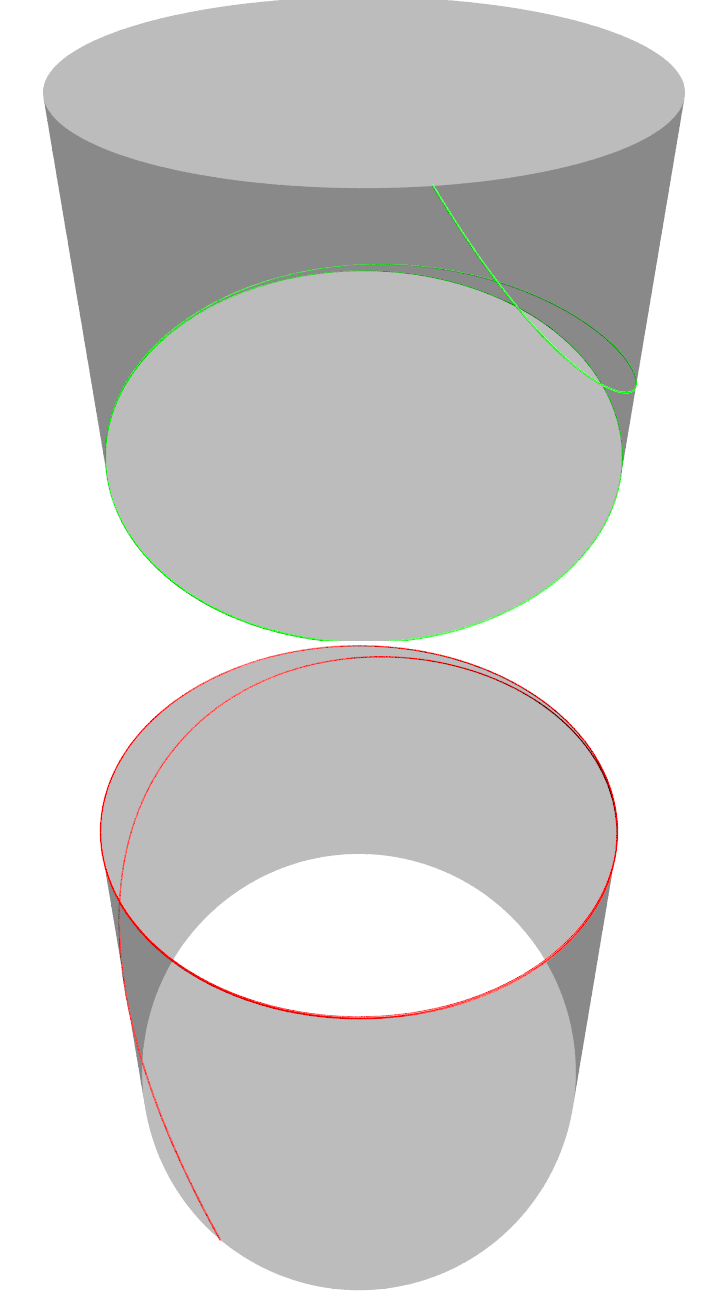} }
    \subfigure{\includegraphics[width=.64\textwidth]{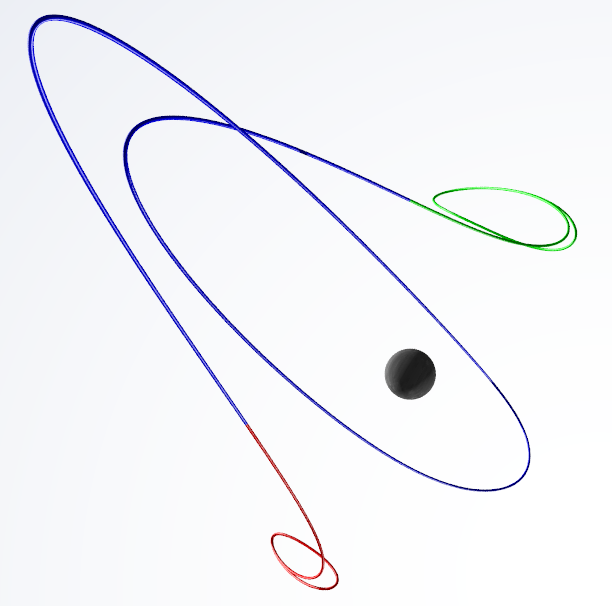} }
    \caption{Extension of the connecting orbit using the parameterization of the manifolds and 
    the conjugacy to linear dynamics. The trajectories in red and green are obtained using the 
    linear dynamics in parameter space rather than integrating in the phase space -- hence there is 
    no threat that numerical instabilities ``kick us off'' the manifold. On the left, the trajectories are displayed in the parameters
    space. The top half of the parameter space is displayed for the stable case since $\sigma$ is positive at all time and the 
    bottom half is displayed for the unstable case. The right frame shows the same orbits lifted
    to the phase space by the parameterization.  Compare with the schematic 
    given in Figure \ref{fig:conjSketch}.}\label{fig:ExtendedConnecting}
\end{figure}

\subsection{A Circular Restricted Four Body Problem}
We now consider a gravitational problem consisting of
three massive bodies (again called the primaries)
located at the vertices of an equilateral 
triangle in the central configuration of Lagrange.  These bodies rotate 
in circular orbits about their common center of mass, all with the same period, 
rigidly fixing the triangular formation.  After changing to a
co-rotating frame we are interested the motion of a 
massless fourth particle moving in the gravitational field of the 
primaries.  In the present work we suppose that the massless particle
moves in the plane of the primaries.  


\begin{figure}
    \centering
    \subfigure{{\includegraphics[width=.7\textwidth]{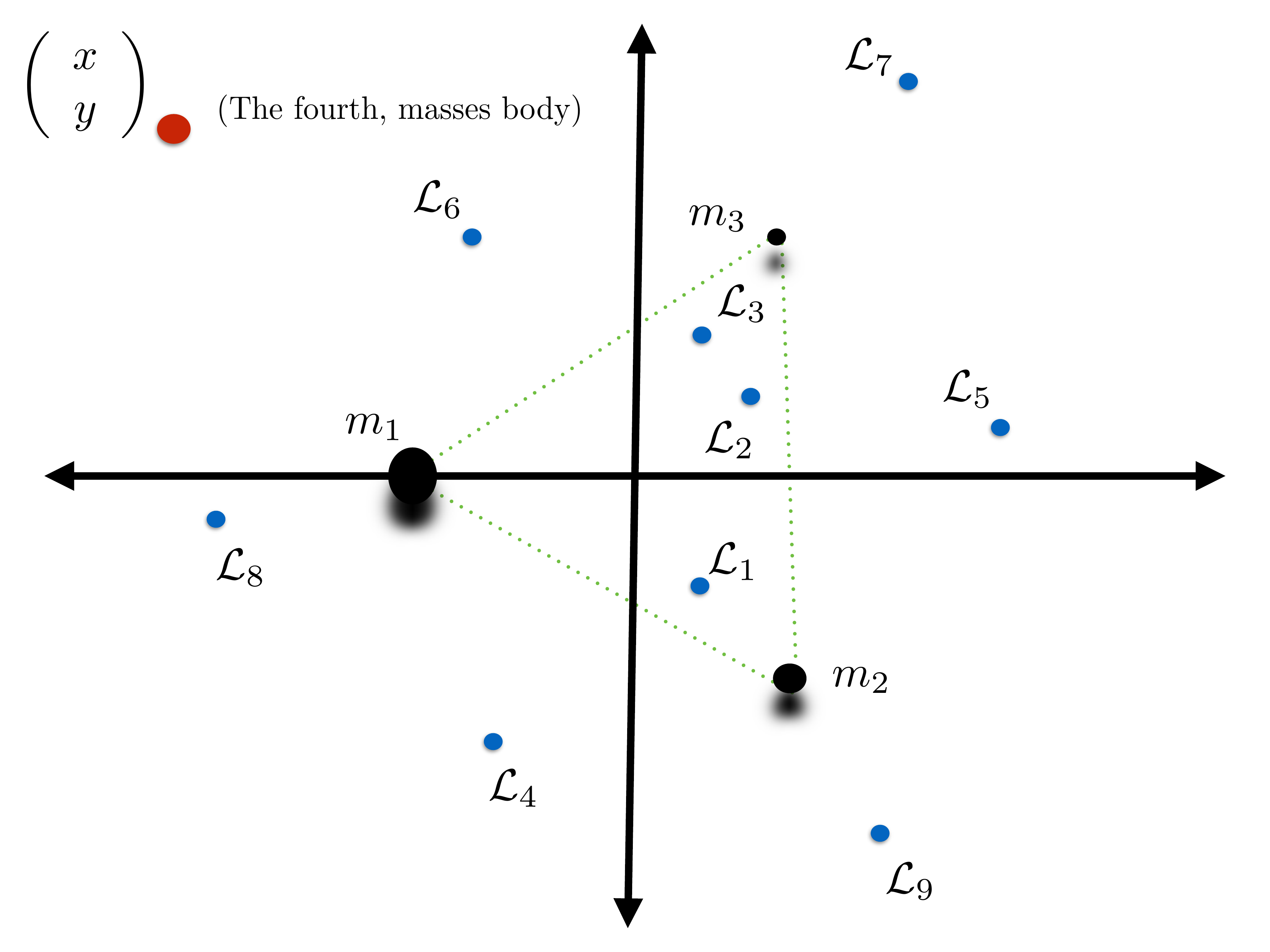} }}
\caption{Schematic representation of the CRFBP: 
three massive bodies -- the primaries -- are arranged in an equilateral
triangle configuration.  After changing to co-rotating coordinates which fix the location 
of the massive bodies, we study a fourth and massless particle moving in the 
gradational field of the primaries.  In the present work we restrict
the fourth body to move in the plane defined by the equilateral triangle.
The figure illustrates the approximate locations of nine librations points (relative equilibria)
occurring for the mass ratios considered in the present work.  The libration points are 
denoted $\mathcal{L}_j$ for $1 \leq j \leq 9$.   
Below, we study invariant manifolds
attached to a hyperbolic Lyapunov orbit about $\mathcal{L}_8$.}\label{fig:fourBodies}
\end{figure}

It is standard practice to 
normalize the masses of the primaries so that $0 < m_3 \leq m_2 \leq m_1$,
and 
\[
m_1 + m_2 + m_3 = 1.
\] 
The rotating coordinates are chosen so that 
 the center of mass is at the origin, the largest
 primary is on the $x$-axis, the $x$-axis 
cuts the side of the triangle opposite the largest primary,
and the smallest primary is in the first quadrant.  
More explicitly,  the primaries are located at positions 
\[
p_1 = (x_1, y_1,z_1), \quad \quad 
p_2 = (x_2, y_2,z_2), \quad \quad
\text{and} \quad \quad
p_3 = (x_3, y_3,z_3),
\]
with 
\begin{eqnarray*}
x_1 &=&   \frac{-|K| \sqrt{m_2^2 + m_2 m_3 + m_3^2}}{K} \\
y_1 &=&   0 \\
z_1 &=& 0
\end{eqnarray*}
\begin{eqnarray*}
x_2 &=&  \frac{|K|\left[(m_2 - m_3) m_3 + m_1 (2 m_2 + m_3)  \right]}{
2 K \sqrt{m_2^2 + m_2 m_3 + m_3^2}
}  \\
y_2 &=&  \frac{-\sqrt{3} m_3}{2 m_2^{3/2}} \sqrt{\frac{m_2^3}{m_2^2 + m_2 m_3 + m_3^2}} \\
z_2 &=& 0
\end{eqnarray*}
and
\begin{eqnarray*}
x_3 &=&  \frac{|K|}{2 \sqrt{m_2^2 + m_2 m_3 + m_3^2}}  \\
y_3 &=&  \frac{\sqrt{3}}{2 \sqrt{m_2}} \sqrt{\frac{m_2^3}{m_2^2 + m_2 m_3 + m_3^2}} \\
z_3 &=& 0
\end{eqnarray*}
where 
\[
K = m_2(m_3 - m_2) + m_1(m_2 + 2 m_3).
\]
Define the potential function 
\[
\Omega(x,y,z) :=
\frac{1}{2} (x^2 + y^2) + \frac{m_1}{r_1(x,y,z)} + \frac{m_2}{r_2(x,y,z)} + \frac{m_3}{r_3(x,y,z)}, 
\]
with
\[
r_1(x,y,z) := \sqrt{(x-x_1)^2 + (y-y_1)^2 +(z-z_1)^2}, 
\]
\[
r_2(x,y,z) := \sqrt{(x-x_2)^2 + (y-y_2)^2 +(z-z_2)^2},
\]
and
\[
r_3(x,y,z) := \sqrt{(x-x_3)^2 + (y-y_3)^2 +(z-z_3)^2}.
\]
The equations of motion for the massless particle 
in the co-rotating coordinates are 
\begin{equation}\begin{split}\label{ecuacionesfinales}
\ddot{x}-2\dot{y}&=\Omega_{x},\\
\ddot{y}+2\dot{x}&=\Omega_{y},\\
\ddot{z} &= \Omega_z,
\end{split}
\end{equation}
where
\[
\frac{\partial}{\partial x} \Omega = 
\Omega_x(x,y,z) = x - \frac{m_1(x - x_1)}{r_1(x,y,z)^3}
- \frac{m_2(x - x_2)}{r_2(x,y,z)^3} - \frac{m_3(x - x_3)}{r_3(x,y,z)^3},
\]
\[
\frac{\partial}{\partial y} \Omega = 
\Omega_y(x,y,z) = y - \frac{m_1(y - y_1)}{r_1(x,y,z)^3}
- \frac{m_2(y - y_2)}{r_2(x,y,z)^3} - \frac{m_3(y - y_3)}{r_3(x,y,z)^3},
\]
and
\[
\frac{\partial}{\partial z} \Omega = 
\Omega_z(x,y,z) = -\frac{m_1(z - z_1)}{r_1(x,y,z)^3}
- \frac{m_2(z - z_2)}{r_2(x,y,z)^3} - \frac{m_3(z - z_3)}{r_3(x,y,z)^3}.
\]

The problem is the subject of many studies beginning with the work of 
\cite{MR510556}.  Detailed analysis of the equilibrium solutions and their 
stability are found in \cite{MR3176322, MR2232439, MR2845212, MR3463046, MR2027748},
while periodic orbits are studied in 
\cite{MR3571218, BaltagiannisPapadakis, MR3500916, burgosPeriodicOrbits, jpHalo}.
The earlier study of \cite{MR2917610} considers stable/unstable manifolds attached
to periodic orbits (using the linear approximation given by 
the stable/unstable normal bundles combined with numerical integration).
More complex dynamical behavior such as heteroclinic/homoclinic phenomena 
and transport are studied numerically in \cite{MR3304062, MR2596303, MR2013214, shaneJay}.
A Hill's approximation is derived in \cite{MR3346723}.
See also the more theoretical 
studies of \cite{MR3326646, MR3626383, MR3158025}.

The CRFBP has $8, 9$, or $10$ equilibrium points depending on the values of the 
mass parameters.  These will have either center $\times$ center, center $\times$ saddle,
or saddle-focus stability, depending on the ratios of the masses.  
As in the CRTBP, the center $\times$ saddle equilibria give rise to one parameter
families of hyperbolic Lyapunov orbits.  We compute local stable/unstable 
manifolds attached to one of these below.

Using automatic differentiation 
we derive a related polynomial vector field
\small
\begin{align*}
g(v)= \begin{pmatrix}
v^2 \\
2v^4 +v^1 -m_1v^1(v^7)^3 -m_2 v^1(v^8)^3 -m_3v^1(v^9)^3 +m_1x_1(v^7)^3 +m_2x_2(v^8)^3 +m_3x_3(v^9)^3 +\beta v^2 \\
v^4 \\
-2v^2 +v^3 -m_1v^3(v^7)^3 -m_2 v^3(v^8)^3 -m_3v^3(v^9)^3 +m_1y_1(v^7)^3 +m_2y_2(v^8)^3 +m_3y_3(v^9)^3 \\
v^6 \\
-m_1v^5(v^7)^3 -m_2 v^5(v^8)^3 -m_3v^5(v^9)^3 +m_1z_1(v^7)^3 +m_2z_2(v^8)^3 +m_3z_3(v^9)^3 \\
-v^1v^2(v^7)^3 -v^3v^4(v^7)^3 -v^5v^6(v^7)^3 +x_1v^2(v^7)^3 +y_1v^4(v^7)^3 +z_1v^6(v^7)^3 +\alpha_1 (v^7)^3 \\
-v^1v^2(v^8)^3 -v^3v^4(v^8)^3 -v^5v^6(v^8)^3 +x_2v^2(v^8)^3 +y_2v^4(v^8)^3 +z_2v^6(v^8)^3 +\alpha_2 (v^8)^3 \\
-v^1v^2(v^9)^3 -v^3v^4(v^9)^3 -v^5v^6(v^9)^3 +x_3v^2(v^9)^3 +y_3v^4(v^9)^3 +z_3v^6(v^9)^3 +\alpha_3 (v^9)^3
\end{pmatrix}.
\end{align*}
\normalsize
The mapping $R$ used in the automatic differentiation is defined  
\cite{shaneJay}, but is similar to the mapping discussed above for the 
three body case.

The constants terms $\beta,\alpha_1,\alpha_2,\alpha_3$
are Lagrange multipliers, which are needed to isolate a periodic solution 
(this time we will not impose any symmetry, hence the boundary condition constraints remain 
and have to be balanced).
The following result, whose proof is found in \cite{jpHalo}, explains the relation between the
polynomial and non-polynomial problems.

\begin{lemma} \label{lemmaFourBody}
Assume that $\beta,\alpha_1,\alpha_2,\alpha_3, L \in \R$ are fixed constant with $L>0$ and let $\textbf{n},\textbf{p} \in \R^6$ be fixed vector. Suppose that $u:[0,2L] \to \R^9$ is a periodic solution of $\dot v(t) = g(v)$ with $g$ as above and
\begin{align*}
0 &= \textbf{n} \cdot ( (u^1(0),u^2(0),u^3(0),u^4(0),u^5(0),u^6(0))^T-\textbf{p}) \\
u^7(0) &= \frac{1}{\sqrt{(u^1(0)-x_1)^2 + (u^3(0)-y_1)^2 +(u^5(0)-z_1)^2}} \\
u^8(0) &= \frac{1}{\sqrt{(u^1(0)-x_2)^2 + (u^3(0)-y_2)^2 +(u^5(0)-z_2)^2}} \\
u^9(0) &= \frac{1}{\sqrt{(u^1(0)-x_3)^2 + (u^3(0)-y_3)^2 +(u^5(0)-z_3)^2}}
\end{align*}
and that $u^7(t),u^8(t),u^9(t)>0$ for all $t \in [0,2L]$. Then
\begin{enumerate}
 \item $\beta=\alpha_1=\alpha_2=\alpha_3=0$
 \item the function $\hat u : [0,2L] \to \R^6$ given by
 \[
 \hat u(t)= (u^1(0),u^2(0),u^3(0),u^4(0),u^5(0),u^6(0))^T
 \]
 is a periodic solution of the four body problem.
\end{enumerate}
\end{lemma}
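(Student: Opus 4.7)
The plan is to proceed in two stages. First I would show that the Lagrange multipliers $\alpha_1, \alpha_2, \alpha_3$ vanish and that $u^7, u^8, u^9$ coincide identically with the reciprocal distances $1/r_j$ along the first six components of $u$, using only the ODEs satisfied by $u^7, u^8, u^9$ together with periodicity and the stated initial conditions. Once this is established, substituting back into the equations for $u^1,\ldots,u^6$ reduces the system to the CRFBP together with a single extra $\beta u^2$ term in the equation for $\dot u^2$, and the Jacobi integral of the CRFBP can then be used to force $\beta = 0$.

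For the first stage, introduce $\rho_1(t) := (u^1(t) - x_1)^2 + (u^3(t) - y_1)^2 + (u^5(t) - z_1)^2$. Since $\dot u^1 = u^2$, $\dot u^3 = u^4$, $\dot u^5 = u^6$, we have $\frac{1}{2}\dot \rho_1 = (u^1-x_1)u^2 + (u^3-y_1)u^4 + (u^5-z_1)u^6$, so the seventh equation of the polynomial system becomes $(u^7)^{-3}\dot u^7 = \alpha_1 - \frac{1}{2}\dot \rho_1$. Recognizing the left hand side as $-\frac{1}{2}\frac{d}{dt}(u^7)^{-2}$ and integrating from $0$ to $t$ yields
\[
\rho_1(t) - (u^7(t))^{-2} = 2\alpha_1 t + \bigl[\rho_1(0) - (u^7(0))^{-2}\bigr].
\]
Evaluating at $t = 2L$ and invoking periodicity of $u$ forces $4L\alpha_1 = 0$, hence $\alpha_1 = 0$; the initial condition on $u^7(0)$ then makes the bracketed term vanish, so $(u^7(t))^{-2} = \rho_1(t)$ for all $t$, and positivity of $u^7$ gives $u^7(t) = 1/r_1(\hat u(t))$. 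The arguments for $\alpha_2, u^8$ and $\alpha_3, u^9$ are word for word identical.

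For the second stage, substitute $u^{6+j} = 1/r_j$ into the polynomial equations for $\dot u^2, \dot u^4, \dot u^6$. After grouping terms the right hand sides collapse to $2u^4 + \Omega_x + \beta u^2$, $-2u^2 + \Omega_y$, and $\Omega_z$ respectively, so $\hat u$ satisfies the CRFBP equations except for the extra $\beta u^2$ summand. Differentiating the Jacobi integral
\[
E(\hat u) := \frac{1}{2}\bigl[(u^2)^2 + (u^4)^2 + (u^6)^2\bigr] - \Omega(u^1, u^3, u^5)
\]
along this modified flow, the standard Hamiltonian cancellations reduce $\dot E$ to the single leftover term $\dot E = \beta (u^2)^2$. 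Periodicity forces $E(2L) = E(0)$, hence $\beta \int_0^{2L} (u^2(t))^2 \, dt = 0$; for any non-trivial orbit ($u^2 \not\equiv 0$) this yields $\beta = 0$, completing both conclusions.

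The main obstacle is mostly careful bookkeeping: one must verify that the polynomial terms in $\dot u^2, \dot u^4, \dot u^6$ involving $(u^7)^3, (u^8)^3, (u^9)^3$ really do collapse onto the correct partial derivatives of $\Omega$ after the substitution, since this is precisely the step that inverts the automatic differentiation change of variables. A minor separate point is the degenerate case $u^2 \equiv 0$, where the energy identity does not determine $\beta$; however the term $\beta u^2$ then vanishes identically and $\hat u$ still solves the CRFBP, so one may set $\beta = 0$ without affecting the dynamics.
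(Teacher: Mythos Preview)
Your argument is correct. The paper does not actually prove this lemma in the text; it defers to the external reference \cite{jpHalo} with the sentence ``The following result, whose proof is found in \cite{jpHalo}\ldots'', so there is no in-paper proof to compare against. Your two-stage approach---first integrating the scalar identity $-\tfrac{1}{2}\tfrac{d}{dt}(u^{6+j})^{-2} = \alpha_j - \tfrac{1}{2}\dot\rho_j$ and using periodicity to kill $\alpha_j$, then substituting $u^{6+j}=1/r_j$ back into the first six equations and differentiating the Jacobi energy along the resulting flow to obtain $\dot E = \beta(u^2)^2$---is precisely the natural route and the computations check out line by line.

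The only point worth flagging is the one you already raised: if $u^2\equiv 0$ on $[0,2L]$ then the energy identity does not pin down $\beta$, and strictly speaking conclusion~(1) of the lemma as stated (``$\beta=0$'') need not hold for such degenerate solutions---any $\beta$ would be compatible with the same orbit. Your handling (``the term $\beta u^2$ then vanishes identically and $\hat u$ still solves the CRFBP'') is the right practical resolution; you might simply add a remark that for an equilibrium solution the parameter $\beta$ is undetermined by the data, so the lemma should be read under the standing assumption that the periodic orbit is non-constant (which is the only case of interest in the paper).
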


Yet, the variables $\alpha_1,\alpha_2,\alpha_3$ are not necessary
 in the Chebyshev setting since one can use the following lemma to force the initial value condition on $v^7,v^8,v^9$ without introducing additional equations. The proof is omitted since it is similar to the proof of Lemma \ref{Lemma:periodicCRTBP}.
\begin{lemma}\label{lemma:4body}
Let $v^1(t),v^2(t),v^3(t),v^4(t),v^5(t),v^6(t)$ be periodic solution with 
same period $\omega$ and such that 
$\dot v^1 = v^2$, $\dot v^3 = v^4$, $\dot v^5 = v^6$. Let $v^7(t),v^8(t),v^9(t)$ satisfy
\begin{align*}
\dot v^7(t) &= -(v^7(t))^3\left((v^1(t)-x_1)v^2(t) +(v^3(t)-y_1)v^4(t) +(v^5(t)-z_1)v^6(t)\right) \\
v^7(0) &= \frac{1}{\sqrt{(v^1(0)-x_1)^2 + (v^3(0)-y_1)^2 +(v^5(0)-z_1)^2}} \\
\dot v^8(t) &= -(v^8(t))^3\left((v^1(t)-x_2)v^2(t) +(v^3(t)-y_2)v^4(t) +(v^5(t)-z_2)v^6(t)\right) \\
v^8(0) &= \frac{1}{\sqrt{(v^1(0)-x_2)^2 + (v^3(0)-y_2)^2 +(v^5(0)-z_2)^2}} \\
\dot v^9(t) &= -(v^9(t))^3\left((v^1(t)-x_3)v^2(t) +(v^3(t)-y_3)v^4(t) +(v^5(t)-z_3)v^6(t)\right)  \\
v^9(0) &= \frac{1}{\sqrt{(v^1(0)-x_3)^2 + (v^3(0)-y_3)^2 +(v^5(0)-z_3)^2}}.
\end{align*}
Then $v^7(t),v^8(t)$ and $v^9(t)$ are periodic with period $\omega$.
\end{lemma}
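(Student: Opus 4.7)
The plan is to mimic the argument used in Lemma \ref{Lemma:periodicCRTBP} almost verbatim, exploiting uniqueness of solutions to the initial value problem together with an explicit candidate periodic solution. For $i=1,2,3$, introduce
\[
w^{i+6}(t) \bydef \frac{1}{\sqrt{(v^1(t)-x_i)^2 + (v^3(t)-y_i)^2 + (v^5(t)-z_i)^2}},
\]
which is well defined and smooth on any time interval where the trajectory $(v^1, v^3, v^5)$ avoids collision with the primary $p_i$. Since $v^1, v^3, v^5$ are $\omega$-periodic by hypothesis, each $w^{i+6}$ is manifestly $\omega$-periodic.

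The first step is to verify that $w^{i+6}$ satisfies the same ODE as $v^{i+6}$. Differentiating $w^{i+6}(t) = \left[(v^1-x_i)^2 + (v^3-y_i)^2 + (v^5-z_i)^2\right]^{-1/2}$ via the chain rule and substituting $\dot v^1 = v^2$, $\dot v^3 = v^4$, $\dot v^5 = v^6$ gives
\[
\dot w^{i+6}(t) = -\bigl(w^{i+6}(t)\bigr)^3 \bigl((v^1-x_i)v^2 + (v^3-y_i)v^4 + (v^5-z_i)v^6\bigr),
\]
which is exactly the equation prescribed for $v^{i+6}$. The second step is to check the initial conditions: the definition of $w^{i+6}$ at $t=0$ agrees with the prescribed value of $v^{i+6}(0)$.

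The third step is to invoke uniqueness. The vector field defining each $v^{i+6}$ equation is $C^1$ in $v^{i+6}$ (since $(v^{i+6})^3$ is polynomial) and continuous in $t$ through the known periodic functions $v^1,\ldots,v^6$, so the standard Picard–Lindelöf theorem applies. Hence $v^{i+6}(t) = w^{i+6}(t)$ on the common interval of existence, and in particular $v^{i+6}$ inherits the $\omega$-periodicity of $w^{i+6}$.

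There is no real obstacle here; the only subtlety is that the argument implicitly requires that $v^1, v^3, v^5$ never simultaneously equal the coordinates of $p_i$, so that $w^{i+6}$ remains finite on $[0,\omega]$. This is automatic from the hypothesis that $v^{i+6}$ itself is defined (and hence finite) as a solution of its ODE: if $v^{i+6}(0)$ is given by the stated reciprocal-distance formula and the ODE admits a solution on $[0,\omega]$, the trajectory must avoid collision with $p_i$ throughout, which is precisely what is needed to make $w^{i+6}$ smooth there.
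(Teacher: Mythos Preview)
Your proof is correct and follows exactly the approach the paper intends: the paper explicitly omits the proof, stating it is similar to that of Lemma~\ref{Lemma:periodicCRTBP}, and your argument---defining the explicit reciprocal-distance function $w^{i+6}$, verifying it satisfies the ODE and initial condition, then invoking uniqueness---is precisely that same proof carried over to the four-body setting.
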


The extra condition balancing the system is the Poincar\'{e} condition which rewrites exactly as in \eqref{eq:PoincareCondition}. This condition rejects potential time translation of a periodic solution. The other conditions are coming from automatic differentiation and are given by
\begin{align*}
v^7(0)^2\big( (v^1(0)-x_1)^2 +(v^3(0)-y_1)^2 +(v^5(0)-z_1)^2 \big) -1 =0, \\
v^8(0)^2\big( (v^1(0)-x_2)^2 +(v^3(0)-y_2)^2 +(v^5(0)-z_2)^2 \big) -1 =0, \\
v^9(0)^2\big( (v^1(0)-x_3)^2 +(v^3(0)-y_3)^2 +(v^5(0)-z_3)^2 \big) -1 =0.
\end{align*}

The boundary condition for each Chebyshev subdomain being used for this problem define the operators $f^{(i,j)}_{\alpha,0}$ as previously given in \eqref{eq:ChebyProblem} and \eqref{eq:periodicity}. For all $1\leq i \leq D$, the case for $k\geq 1$ are given by
\small
\begin{align*}
f^{(i,1)}_{\alpha,k}(A) = 2k a_{\alpha,k}^{(i,1)} +L_i \bigg( &-\alpha\lambda a_{\alpha,k\pm1}^{(i,1)}  +a_{\alpha,k\pm 1}^{(i,2)} \bigg), \\
f^{(i,2)}_{\alpha,k}(A) = 2k a_{\alpha,k}^{(i,2)} +L_i \bigg( &-\alpha\lambda a_{\alpha,k\pm1}^{(i,2)}  +2a_{\alpha,k\pm 1}^{(i,4)} +a_{\alpha,k\pm 1}^{(i,1)} +\beta a_{\alpha,k\pm 1}^{(i,2)}\\
                                          &-m_1\cC(1,7,7,7) -m_2\cC(1,8,8,8) -m_3\cC(1,9,9,9) \\
                                          &+m_1x_1\cC(7,7,7) +m_2x_2\cC(8,8,8) +m_3x_3\cC(9,9,9)\bigg), 
                                          \end{align*}
                                          \begin{align*}
f^{(i,3)}_{\alpha,k}(A) = 2k a_{\alpha,k}^{(i,3)} +L_i \bigg( &-\alpha\lambda a_{\alpha,k\pm1}^{(i,3)}  +a_{\alpha,k\pm 1}^{(i,4)} \bigg), \\
f^{(i,4)}_{\alpha,k}(A) = 2k a_{\alpha,k}^{(i,4)} +L_i \bigg( &-\alpha\lambda a_{\alpha,k\pm1}^{(i,4)} -2a_{\alpha,k\pm 1}^{(i,2)} +a_{\alpha,k\pm 1}^{(i,3)} \\
                                          &-m_1\cC(3,7,7,7) -m_2\cC(3,8,8,8) -m_3\cC(3,9,9,9) \\
                                          &+m_1y_1\cC(7,7,7) +m_2y_2\cC(8,8,8) +m_3y_3\cC(9,9,9)\bigg), 
                                          \end{align*}
                                          \begin{align*}
f^{(i,5)}_{\alpha,k}(A) = 2k a_{\alpha,k}^{(i,5)} +L_i \bigg( &-\alpha\lambda a_{\alpha,k\pm1}^{(i,5)}  +a_{\alpha,k\pm 1}^{(i,6)} \bigg), \\
f^{(i,6)}_{\alpha,k}(A) = 2k a_{\alpha,k}^{(i,6)} +L_i\bigg( &-\alpha\lambda a_{\alpha,k\pm1}^{(i,6)} -m_1\cC(5,7,7,7) -m_2\cC(5,8,8,8) -m_3\cC(5,9,9,9) \\
                                          &+m_1z_1\cC(7,7,7) +m_2z_2\cC(8,8,8) +m_3z_3\cC(9,9,9)\bigg), 
                                          \end{align*}
\begin{align*}
f^{(i,7)}_{\alpha,k}(A) = 2k a_{\alpha,k}^{(i,7)} +L_i \bigg( &-\alpha\lambda a_{\alpha,k\pm1}^{(i,7)} -\cC(1,2,7,7,7) -\cC(3,4,7,7,7)-\cC(5,6,7,7,7) \\
                                          &+x_1\cC(2,7,7,7) +y_1\cC(4,7,7,7) +z_1\cC(6,7,7,7) \bigg), 
                                          \end{align*}
                                          \begin{align*}
f^{(i,8)}_{\alpha,k}(A) = 2k a_{\alpha,k}^{(i,8)} +L_i \bigg( &-\alpha\lambda a_{\alpha,k\pm1}^{(i,8)} -\cC(1,2,8,8,8) -\cC(3,4,8,8,8)-\cC(5,6,8,8,8) \\
                                          &+x_2\cC(2,8,8,8) +y_2\cC(4,8,8,8) +z_2\cC(6,8,8,8) \bigg), 
                                          \end{align*}
                                          \begin{align*}
f^{(i,9)}_{\alpha,k}(A) = 2k a_{\alpha,k}^{(i,9)} +L_i \bigg( &-\alpha\lambda a_{\alpha,k\pm1}^{(i,9)} -\cC(1,2,9,9,9) -\cC(3,4,9,9,9)-\cC(5,6,9,9,9)\\
                                          &+x_3\cC(2,9,9,9) +y_3\cC(4,9,9,9) +z_3\cC(6,9,9,9) \bigg), 
\end{align*}
\normalsize
Again, we solve recursively the truncated operator to obtain an approximation of the manifold. The case $\alpha=0$ will have the extra variable $\beta$ from Lemma \eqref{lemmaFourBody} and the case $\alpha=1$ will have the eigenvalue as an extra unknown. Note that in this case we fix the frequency $L$ to a constant, it is still possible to find a solution for the same reason as mentioned in the case of the CRTBP. In figure \ref{fig:ManifoldCRFBP}, we display an unstable manifold for a planar 
Lyapunov orbit about $\mathcal{L}_8$. The computations were done with $D=4$ and $N=60$. We used masses
\[
m_1\approx 0.9987 ,~m_2 \approx0.0010 ~\mbox{and}~ m_3\approx 0.0003.
\]

We remark that, for the purposes of the present demonstration, we could have taken $m_1, m_2, m_3$
to have more or less any values.
Nevertheless the particular choice of mass values was suggested in \cite{jaimeComm}.  The values
of $m_1, m_2$ correspond respectively to the mass of the primary star in the binary system Epsilon Reticuli
(Henry Draper Catalogue number $27442$), and the mass of an extra solar planet discovered 
orbiting HD $27442$ in the year $2000$.  The mass ratio of these bodies is such that 
the system could form a CRFBP with a third trojan object. 
Our value of $m_3$ corresponds to that of a supposed Saturn like planet forming an 
equilateral triangle with $m_1$ and $m_2$.  We refer the reader
to \cite{MR2511264} for more extensive discussion of exoplanets.

We consider a Lyapunov orbit near the heaviest mass (an orbit about the 
libration point $\mathcal{L}_8$), and for this choice a uniform mesh was not suitable.
That is: it gets harder to obtain an accurate approximation as the orbit approach one of the 
heavy body. Thus, we took
\[
L_1= 0.0907L,~L_2=0.607L,~L_3 = .22L~\mbox{and}~ L_4= 0.0823L.
\]
Note that these sum to $1$, so that the integrating time for the orbit is preserved. 
To understand why the accuracy is affected as the orbit approach a body, recall that the variables arising from 
automatic differentiation are inversely proportional to the distance between the object and the corresponding 
primary, thus provoking a considerable change in the amplitude in the additional variables. \\
\begin{figure}%
    \centering
    \subfigure{{\includegraphics[width=.48\textwidth]{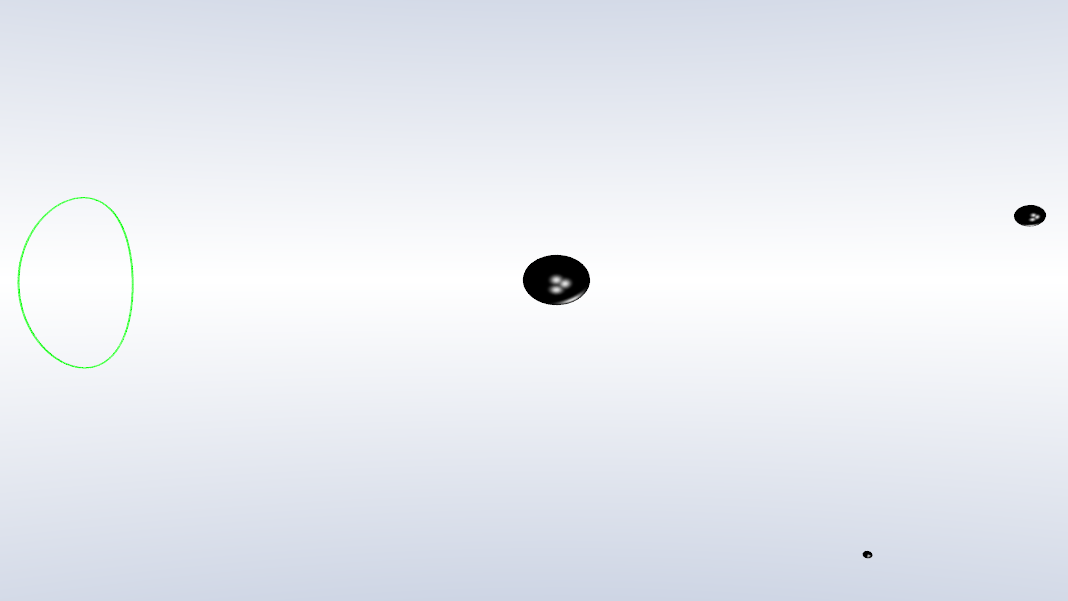} }}
    \subfigure{{\includegraphics[width=.48\textwidth]{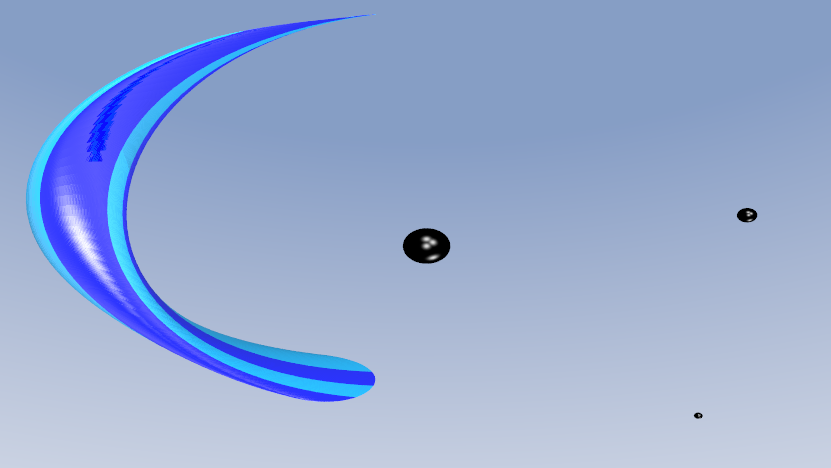} }}
    \caption{Planar Lyapunov orbit about CRFBP $\mathcal{L}_8$ (left) and its parameterized
    local stable manifold (right). The components displayed are $(x,y)$ 
    on the left and $(x,\dot x, y)$ on the right.}\label{fig:ManifoldCRFBP}
\end{figure}


\subsubsection{Connecting Orbits}
We now consider a homoclinic connection, and 
hence there are no energy considerations. 
For the planar Lyapunov periodic orbit displayed in figure \ref{fig:ManifoldCRFBP} 
we computed both the stable and unstable manifold and  apply a similar BVP approach 
as for the CRTBP.  Again, the 
connecting orbit starts on the boundary of the unstable manifold and 
ends on the boundary of the stable manifold. The conjugacy relation is used to 
compute the asymptotic behavior without phase space 
integration. In this case both eigenvalues have the same 
value with opposite signs, so the integrating time forward or backward needed is 
the same. 

We remark that to get from the boundary of the 
invariant manifold to $\sigma \approx 10^{-8}$ close to the periodic orbit
(so that the error in the linear approximation is on the order of machine epsilon)
one would need to integrate for roughly $t=342$ time units.  So this problem 
illustrates starkly the utility of using the local parameterizations
to absorb such a substantial portion of the homoclinic orbit.

Both manifolds are computed using $N=50$, $D=4$, $m=50$, $K=2$ and $k_0=10$. 
We computed one connecting orbit for each component of the local manifold 
boundaries. In the case of 
$\sigma_u=1$, the coordinates for the connecting orbit are
\begin{align*}
(\theta_u,\sigma_u) &\approx (0.502504125750113,1) \\
(\theta_s,\sigma_s) &\approx (5.287357153093578,1) \\
T &\approx 24.335325092442929.
\end{align*}
For the case of $\sigma_u=-1$, the coordinates are
\begin{align*}
(\theta_u,\sigma_u) &\approx (1.6623173901,-1) \\
(\theta_s,\sigma_s) &\approx (4.7735705589, -1) \\
T &\approx 41.3130392127.
\end{align*}
The sign of the value $\sigma$ is affected by the choice of the eigenvector, 
i.e. the sign determines the polarity of the embedding.
 In this case we picked the eigenvectors so that the boundaries have the same 
 sign when they lay on the same side of the orbit in the choice of coordinates displayed.

The reason the dynamics on the parameterized manifolds are so slow in 
this example is that the Floquet exponent $\lambda \approx \pm 0.0538$ is much closer to zero
than in any previous example. The connecting orbits are displayed in figure \ref{fig:ConnectingOrbit4B} 
along with both manifolds. The extension of the first orbit using the conjugacy relation is displayed in 
figure \ref{fig:ExtendedConnecting4B}. In both cases the coordinates displayed are $(x,y,\dot y)$.

\begin{figure}
 \centering
 \includegraphics[width=.9\textwidth]{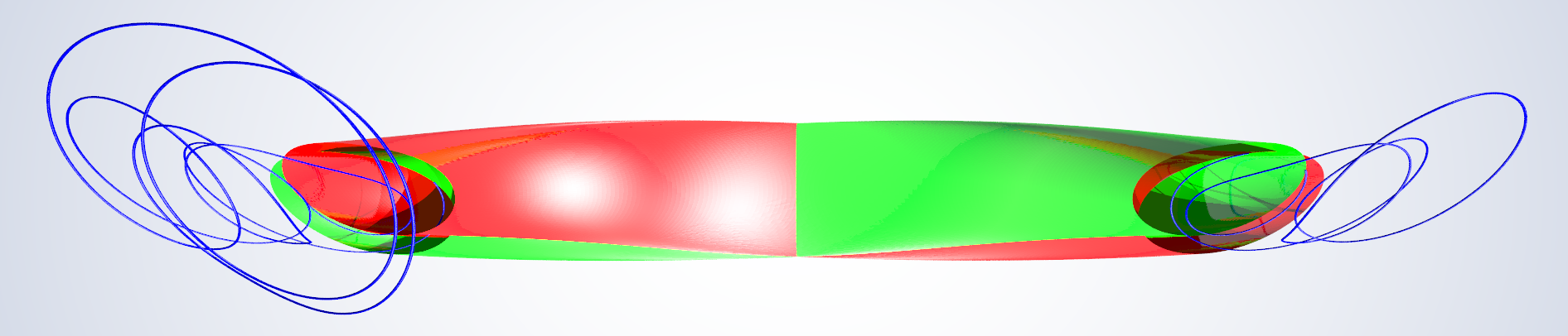}
 \caption{Homoclinic connections to the orbit displayed in figure \ref{fig:ManifoldCRFBP}.  The 
 coordinates are $x, \dot x, y$. The stable manifold is displayed in green while the unstable is in red.
 Homoclinics are the blue space curves.}\label{fig:ConnectingOrbit4B}
\end{figure}

\begin{figure}
    \centering
    \subfigure{\includegraphics[width=.28\textwidth]{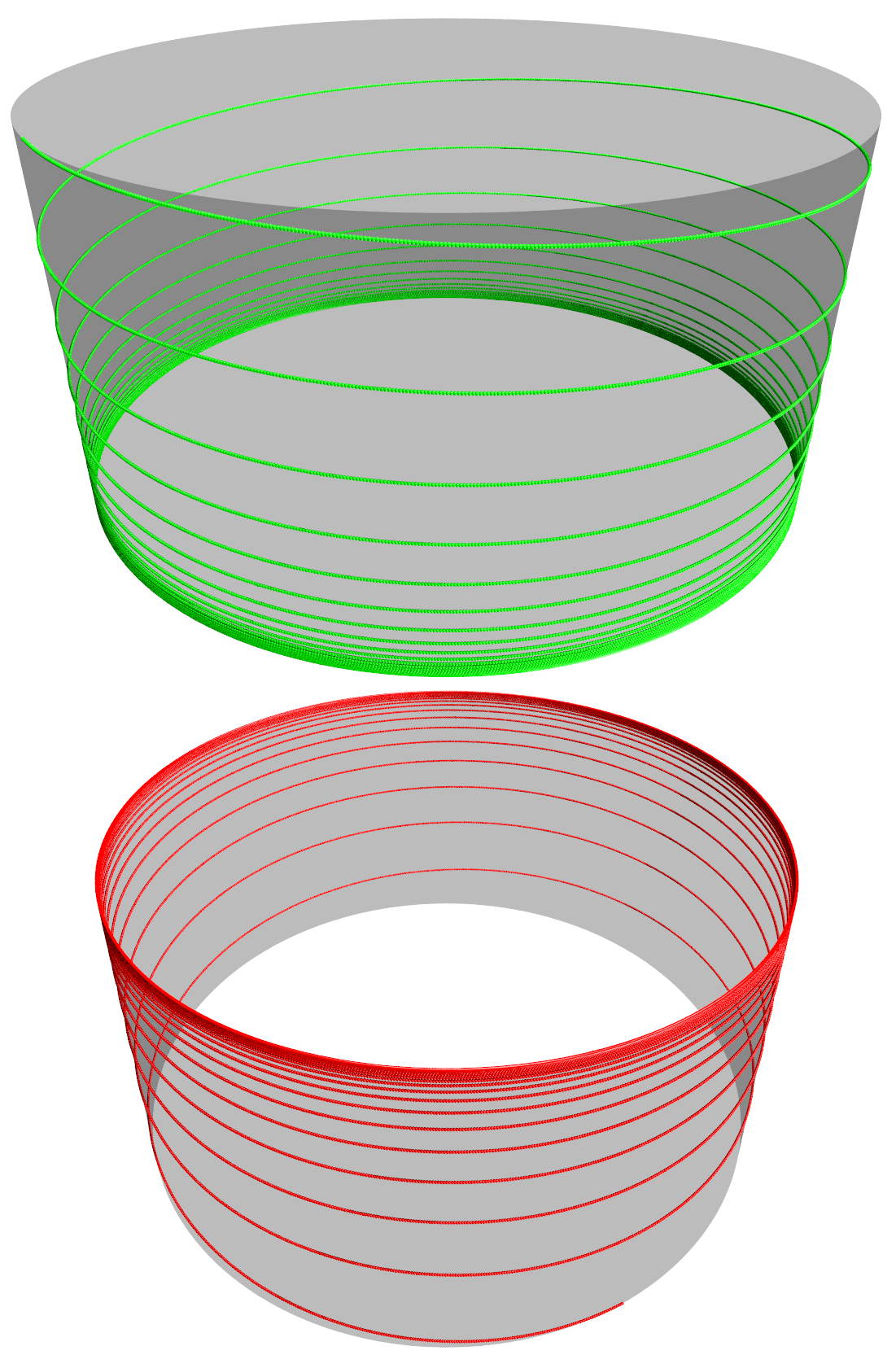} }
    \subfigure{\includegraphics[width=.68\textwidth]{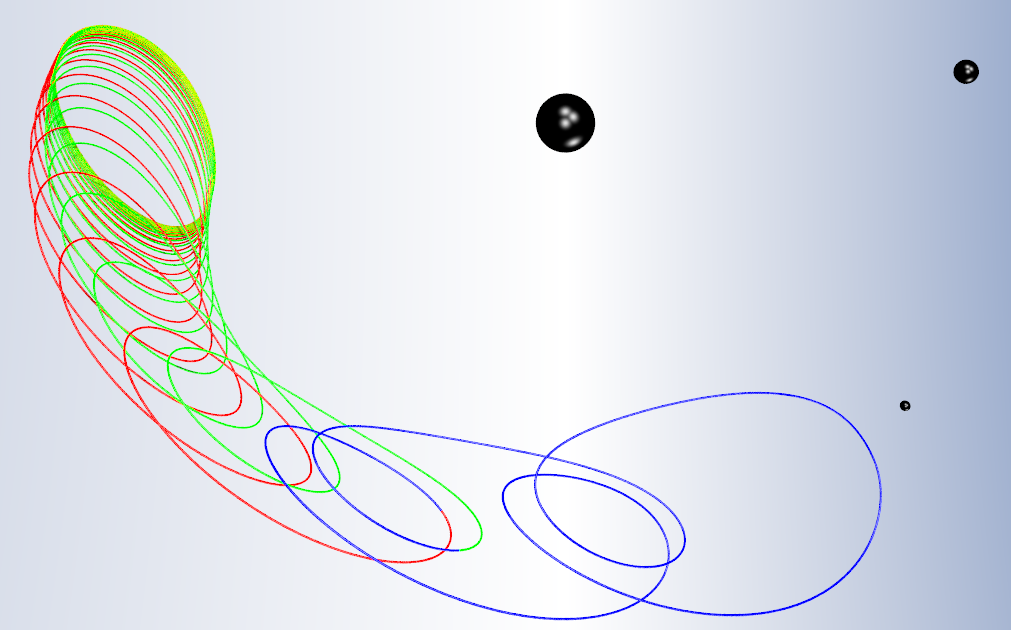} }
    \caption{Extension of the connecting orbit on the right in figure \ref{fig:ConnectingOrbit4B} using the parameterization of the manifolds and the conjugacy relation. 
    The red and green portions of the homoclinic are integration free. That is, they are obtained using the linear 
    dynamics in parameter space and exploiting the conjugacy given by the parameterization method.  
    On the left, the trajectories are displayed parameter space -- with the stable parameter cylinder above the unstable.
    Compare again with the Schematic of Figure $3$.}\label{fig:ExtendedConnecting4B}
\end{figure}

\section{Conclusion}
The methods of the present work facilitate accurate computation of local stable/unstable manifolds
attached to periodic orbits of vector fields in a large neighborhood of the orbit itself. 
The computations exploit Chebyshev expansions, so that domain decomposition 
can be used to improve the accuracy of the parameterization (compared to a Fourier-Taylor expansion) 
without necessarily increasing the total number spectral modes used. 
The results approximate the local manifolds in relatively large regions of phase space. 
The method is based on solving an invariance equation, so that the computations are
equipped with a convenient notion of defect/a-posteriori error.

The method is non-perturbative.  
So even though manifolds for Lyapunov orbits  
were computed in the CRTB and CRFB problems, the calculations 
do not use the fact that we were near an equilibrium. 
Since the parameterization method
is based on finding a zero of an invariance equation, and since we
use a Newton scheme to compute the numerical solution, it would 
be natural to develop numerical continuation methods
for the manifold computations.  Continuation in frequency, energy, or other system 
parameter would be natural.  
In this case the manifolds, and even the connecting orbits, 
do not have to be recomputed from scratch as 
parameters are changed.  Rather, the old orbits/manifolds can be used as
the initial guess for the Newton method at the new parameters.  
Utilizing a predictor/corrector scheme would also be natural.

An interesting topic for future work would be to compare the techniques developed 
in the present work with other techniques for computing high order expansions of 
local invariant manifolds attached periodic orbits.
For example, invariant objects similar to those discussed in Section \ref{sec:CRTBP} 
have been computed by a number of authors using methods based on Lindstedt-Poincar\'{e}
series or using high order normal forms  
\cite{MR3026237, MR2425052, MR2114685, MR2329518, MR1630282}.
One of the main differences between the methods of the works just cited, and 
the methods of the present work, is that both Lindstedt-Poincar\'{e} and normal 
form methods develop expansions valid in a full neighborhood of the periodic orbit. The 
stable/unstable manifolds are then obtained as suitable zero sections.  That is: the number of 
variables used in the expansion is equal to the dimension of the phase space rather than the 
dimension of the underlying invariant object.

Computing an expansion of a full neighborhood of the periodic orbit
is important for many applications.  For example
when designing a ``fly-by'' mission one wants to find
trajectories which approach the periodic orbit along (but not on) the stable manifold, 
and then move away after a finite time along (but not on) the unstable manifold. 
On the other hand if one is primarily interested in heteroclinic and homoclinic connections then
computing on a full neighborhood is much more expensive then just parameterizing the 
manifold as in the present work.  For example the parameterizations in Section \ref{sec:CRTBP} 
were computed using $N= 10$ Taylor order.  Since the parameterizations were expanded 
only in the stable or unstable direction, this requires computing only ten unknown Taylor coefficient
(each of which is a periodic function).  If one uses instead Lindstedt-Poincar\'{e} or normal 
form methods, then it is necessary to expand in one angle variable and three polynomial 
variables, and a polynomial of order $N=10$ in three variables has 
$286$ unknown coefficients (again, each of these is a periodic function). 
This back of the envelope comparison illustrates the advantage of parameterizing only 
the desired manifold, and not the full neighborhood when the particular application allows.

\begin{figure}
    \includegraphics[width=.6\textwidth]{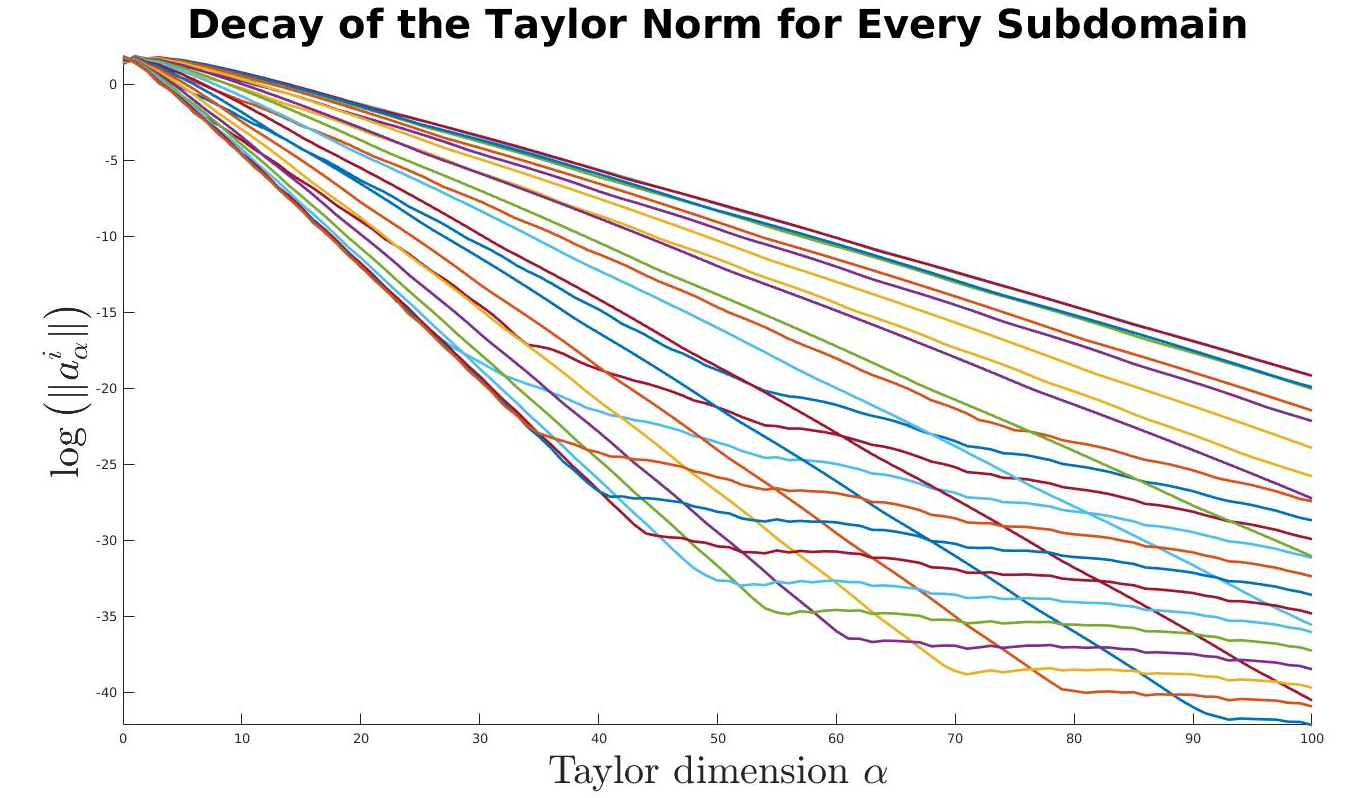}
    \caption{Decay of the logarithm of the norm of every subdomain. 
    The decay becomes slower once the norm reach machine precision. 
    The ``knee'' in the decay rates is due to the use of fast Fourier transform to compute convolution products, 
    which tends to stop or slow the decay after machine precision is reached.}\label{fig:DecayProfile}
\end{figure}

Another interesting possibility for future improvement is to study more
carefully the effects of a non-uniform subdivision strategy
for choosing the Chebyshev domains.  
In figure \ref{fig:DecayProfile}, we computed the norm of each sequence of Chebyshev expansion 
in the case of the stable manifold of the orbit AB displayed in figure \ref{fig:ABstableManifoldWithOrbits}
\[
\|a_\alpha^i\| = \sum_{j=1}^3 \sum_{k=0}^{m-1} \left| a_{\alpha,k}^{(i,j)} \right|,
\]
for all $\alpha=0,\hdots,100$ and $i=1,\hdots,50$. The scale of the eigenvector was chosen so that the norm of the last Taylor dimension is below machine precision in every subdomain. However, one can see from the figure that in this case several component were reaching this magnitude much earlier than some other ones. Such differences arise from the fact that the mesh was uniform in this case. One way to obtain bigger manifold without increasing the number of modes would be to use mesh adaptation, as in \cite{RayJB}. 
We also remark that there is a possibility that the computations could be sped up by pre-computing
the Floquet normal form as in 
\cite{MR3304254}.  The Floquet normal form would have to be discretized using 
Chebyshev rather than Fourier series, and we have not yet explored this possibility.

\nonumsection{Acknowledgments}
\noindent The authors wish to thank Jaime Burgos-Garc\'{i}a and J.P. Lessard for helpful conversations,
and to thank an anonymous referee for carefully reading the original submission and making 
a number of helpful comments and suggestions.  The final version of the manuscript is improved thanks to 
these efforts.  Both Maxime Murray and J.D. Mireles James were 
partially supported by NSF grant DMS-1700154, and by  the Alfred P. Sloan Foundation grant G-2016-7320.

\newpage

\bibliographystyle{apalike}
\bibliography{references}

\end{document}